\documentclass[12]{amsart}
\usepackage{graphicx}
\usepackage{amscd}
\usepackage{lineno,hyperref}
\usepackage[bottom]{footmisc}
\usepackage{amsmath}
\usepackage{amssymb}
\usepackage{esint}
\usepackage{curves}
\usepackage{array,multirow,graphicx}
\usepackage{tikz}
\usepackage{nicefrac}
\usetikzlibrary{shapes.geometric}
\usetikzlibrary{shapes.multipart}
\RequirePackage{hhline}
\usepackage{enumerate}
\RequirePackage{multirow}
\usetikzlibrary{positioning}

\newtheorem{theorem}{Theorem}[section]

\newtheorem{conjecture}{Conjecture}[section]
\newtheorem{corollary}{Corollary}[section]

\newtheorem{definition}{Definition}[section]
\newtheorem{example}{Example}[section]

\newtheorem{problem}{Problem}[section]
\newtheorem{proposition}{Proposition}[section]
\newtheorem{remark}{Remark}[section]

\begin{document}
\title[ ]{On the Structure of Finite Groups Associated to Regular Non-Centralizer Graph}
\author{Tariq A. Alraqad}
\address{Department of Mathematics, University of Hail, Kingdom of Saudi
Arabia} \email{t.alraqad@uoh.edu.sa}

\author{Hicham Saber}
\address{Department of Mathematics, University of Hail, Kingdom of Saudi
Arabia} \email{hicham.saber7@gmail.com}
\subjclass[2010]{\footnotesize Primary 20B05, Secondary 05C25}
\keywords{Centralizers, Finite Groups, Graph, regular}
\begin{abstract}
The non-centralizer graph of a finite group $G$ is the simple graph $\Upsilon_G$ whose vertices are the elements of $G$ with two vertices $x$ and $y$ are adjacent if their centralizers are distinct. The induced subgroup of $\Upsilon_G$ associated with the vertex set $G\setminus Z(G)$ is called the induced non-centralizer graph of $G$. The notions of non-centralizer and induced non-centralizer graphs were introduced by Tolue in \cite{to15}. A finite group is called regular (resp. induced regular) if its non-centralizer graph (resp. induced non-centralizer graph) is regular. In this paper we study the structure of regular groups as well as induced regular groups. Among the many obtained results,  we prove that if a group $G$ is regular (resp. induced regular) then $G/Z(G)$ as an elementary $2-$group (resp. $p-$group).   
\end{abstract}

\maketitle

\section{Introduction}

For standard terminology and notion in graph theory and group theory, we refer the reader to the text-books of \cite{bo76} and \cite{ro02} respectively. Let $\Upsilon$ be a simple graph. The degree of a vertex $x$ in $\Upsilon$, denoted by $deg(x)$, is the number of vertices adjacent with $x$. $\Upsilon$ is said to be regular if all of its vertices have the same degree. If $deg(x)=n$, we say that $\Upsilon$ is $n-$regular (or regular of degree $n$). 

Throughout this paper, $G$ denotes a finite group. The order of a group $G$ (respectively the order of $x$ in $G$)  is denoted by $|G|$ (respectively $o(x)$. The centralizer of $x$ is $C_G(x)=\{y\in G \mid yx=xy\}$, and the center of $G$ is $Z(G)=\{x\in G\mid xy = yx, \forall y\in G\}$. For an $x$ in $G$, the coset $xZ(G)$ is denoted by $\overline{x}$. The set $Cent(G)=\{C_G(x) \mid x\in G\}$ is the set of distinct centralizers in $G$. 

The notion of non-centralizer graph was introduced by Tolue in \cite{to15}: The non-centralizer graph of a finite group $G$, denoted by $\Upsilon_G$, is the simple graph whose vertices are the elements of $G$ with two vertices $x$ and $y$ are adjacent if $C_G(x)\neq C_G(y)$. The induced subgroup of $\Upsilon_G$ associated with the vertex set $G\setminus Z(G)$ is called the induced non-centralizer graph and denoted by $\Upsilon_{G\setminus Z(G)}$. It is clear that $\Upsilon_G$ is a complete $|Cent(G)|-$partite graph and $\Upsilon_{G\setminus Z(G)}$ is complete $(|Cent(G)|-1)-$partite graph. Classical properties of these graphs such as diameter, girth, domination and chromatic numbers, and independent set were studied in \cite{to15}. Also the author showed that the induced non-centralizer graph and the non-commuting graph associated to an $AC-$group are isomorphic (for information on non-commuting graph see \cite{ab06,ne76}). Interestingly, Tolue proved that  $\Upsilon_G$ is $6$-regular if and only if $G\cong D_8$ or $Q_8$, and  $\Upsilon_G$ is not $n$-regular, for $n=4,5,7,8,11,13$, leading to the following conjecture.

\begin{conjecture}\label{tconj} \cite{to15}
$\Upsilon_G$ is not $p-$regular graph, where $p$ is a prime integer.
\end{conjecture}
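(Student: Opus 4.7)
The plan is to exploit the complete multipartite structure of $\Upsilon_G$ and to turn the primality hypothesis into a hard divisibility constraint. First I would partition $G$ into the fibres $A_i = \{x \in G : C_G(x) = C_i\}$ as $C_i$ ranges over $\mathrm{Cent}(G)$; since $\Upsilon_G$ is complete multipartite on these parts, every vertex in $A_i$ has degree $|G|-|A_i|$. Regularity therefore forces $|A_1| = \cdots = |A_k| =: m$, giving $|G| = km$ and common degree $(k-1)m$, where $k = |\mathrm{Cent}(G)|$. The fibre over $G \in \mathrm{Cent}(G)$ is exactly $Z(G)$, so one of the parts is $Z(G)$ itself and hence $m = |Z(G)|$.

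Assuming $\Upsilon_G$ is $p$-regular for a prime $p$, I would next observe that $G$ must be non-abelian (otherwise $\Upsilon_G$ is edgeless), which in turn gives $|\mathrm{Cent}(G)| \geq 4$: the classical fact that a group is never the union of two proper subgroups rules out $k \leq 3$, since $k \leq 3$ would express $G$ as the union of the at most two proper centralizers in $\mathrm{Cent}(G) \setminus \{G\}$. Combining $k-1 \geq 3$ with $(k-1)m = p$ prime leaves only the possibility $m = 1$, whence $Z(G) = \{e\}$ and $k = p+1$.

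The closing step is to derive a contradiction from $|Z(G)| = 1$. Since $C_G(x) = C_G(x^{-1})$ for every $x \in G$, the singleton class $A_x = \{x\}$ must already contain $x^{-1}$, forcing $x = x^{-1}$. Thus every element of $G$ has order at most $2$, which makes $G$ an elementary abelian $2$-group and in particular abelian, contradicting the non-abelianness established above. I expect the only point that needs any care is the bound $|\mathrm{Cent}(G)| \geq 4$, and even that is standard; once the multipartite picture is in place the rest is bookkeeping. If one were instead willing to invoke the paper's structural theorem that $G/Z(G)$ is an elementary abelian $2$-group in the regular case, the same dichotomy $(|Z(G)|,\,k-1) \in \{(1,p),(p,1)\}$ would close out even more quickly, but the argument above shows the conjecture without that input.
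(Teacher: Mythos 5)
Your proof is correct, but it takes a genuinely different and more elementary route than the paper's. The paper deduces the conjecture (in the stronger prime-power form) from two structural theorems: Theorem~\ref{creg}, which shows $G/Z(G)$ is an elementary abelian $2$-group when $G$ is regular, and Theorem~\ref{ncen}, which forces $|Z(G)|$ to be even; writing $n=|Z(G)|([G:Z(G)]-1)$ as a product of an even number with an odd number at least $3$ then rules out every prime power degree. You instead work directly with the multipartite partition: equal part sizes $m=|Z(G)|$, degree $(k-1)m=p$, the classical bound $k=|\mathrm{Cent}(G)|\geq 4$ (your union-of-two-proper-subgroups justification is exactly the content of Proposition~\ref{be0}), and then the inversion symmetry $C_G(x)=C_G(x^{-1})$ applied to singleton parts to force $x^2=e$ for all $x$ and hence abelianness --- a clean contradiction. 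What your approach buys is brevity and self-containment: you need no normality of centralizers and no analysis of $G/Z(G)$. What it gives up is generality: the step ``$(k-1)m=p$ with $k-1\geq 3$ forces $m=1$'' has no analogue when $p$ is replaced by a prime power $p^a$ (one could have $m=p^b$ with $0<b<a$), so your argument proves exactly the stated conjecture while the paper's Corollary~\ref{preg} proves the stronger non-$p^a$-regularity. Your closing remark correctly identifies how the paper's structural input would shortcut the ending.
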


In this paper we prove this conjecture by showing that $\Upsilon_G$ in not $n$-regular, if $n$ is a prime power integer. This is  direct consequence of deeper results giving the structure of regular groups such as Theorem \ref{creg} and Theorem \ref{ncen}. 

 In the last few decades, many researches showed interest in characterizing groups by properties of their centralizers such as commutativity  and number of centralizer see for instant \cite{ab07,as00, as000, as06, ba13, be94, fo14, re16, za09}. Our work is no exception of this trend. Indeed, the sets $\beta_G(x)=\{y\in G\mid C_G(y)=C_G(x)\}$, $x\in G$,  as well as the notion of maximal centralizers play key rolls in this paper.  

In the sequel, we say that $G$ is regular (resp. induced regular) if $\Upsilon_G$ is regular (resp. $\Upsilon_{G\setminus Z(G)}$ is regular). In section \ref{reg}, we study the regularity of the non-centralizer graph. We show that if $G$ is regular then $G/Z(G)$ is elementary $2-$group. We also prove that a group is regular if and only if it is the direct product of a regular $2-$group and an abelian group. Following this result, the notion of reduced regular groups is introduced. Moreover, We observe that if $G$ is $n-$regular then every centralizer in $G$ is normal and $n+2\leq |G| \leq 4n/3$. We end this section with a table listing reduced $n-$regular  $2-$groups for all possible values of $n\leq 60$.

Section \ref{ireg} is devoted to the regularity of $\Upsilon_{G\setminus Z(G)}$. Using the concept of maximal centralizers, we obtain many results on the structure of induced regular groups. We prove that if $G$ is induced regular then $G/Z(G)$ is a $p-$group. We also show that a group $G$ is induced regular if and only if it is the direct product of an induced regular $p-$group and an abelian group.  

\section{Regularity of $\Upsilon_G$}\label{reg}

We start this section by listing some known results that will be used later in the sequel. 

\begin{proposition} \label{be0}\cite[Fact 2]{be94}
If $G$ is non-abelian group then $|Cent(G)|\geq 4$.
\end{proposition}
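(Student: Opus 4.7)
The plan is to argue by contradiction: assume $G$ is non-abelian with $|Cent(G)|\leq 3$, and rule out the cases $|Cent(G)|=1,2,3$ in turn. The key observation I will use throughout is that $x\in C_G(x)$ for every $x\in G$, that $Z(G)\subseteq C_G(x)$ for every $x$, and that the sets $\beta_G(x)=\{y:C_G(y)=C_G(x)\}$ partition $G$, with $\beta_G(z)=Z(G)$ when $z$ is central (since $C_G(z)=G$).

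First I would dispose of the trivial case $|Cent(G)|=1$: this forces $C_G(x)=G$ for all $x$, i.e.\ $G$ abelian. For $|Cent(G)|=2$, one of the two centralizers must be $G$ itself (coming from any central element, and $Z(G)$ is nontrivially contained in every centralizer; more directly, for any $z\in Z(G)$ we have $C_G(z)=G$). Then every non-central element $x$ has a common proper centralizer $C$. Since $x\in C_G(x)=C$, every non-central element lies in $C$, and $Z(G)\subseteq C$ as well, so $G=C$, contradicting that $C$ was a proper centralizer of a non-central element.

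The main step is $|Cent(G)|=3$. Here the three centralizers are $G,C_1,C_2$, where $C_1=C_G(x_1)$ and $C_2=C_G(x_2)$ for some non-central $x_1,x_2$. Partitioning via the $\beta_G$-classes gives
\[
G \;=\; Z(G)\;\cup\;\beta_G(x_1)\;\cup\;\beta_G(x_2).
\]
Because $y\in C_G(y)$ for every $y$ and $Z(G)\subseteq C_1\cap C_2$, each piece on the right is contained in $C_1\cup C_2$, so $G=C_1\cup C_2$. But a group is never the union of two proper subgroups (the classical lemma: if $G=H\cup K$ with $H,K<G$, pick $h\in H\setminus K$ and $k\in K\setminus H$, then $hk$ lies in neither), yielding the desired contradiction.

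The only subtle point is justifying that $C_1,C_2$ are proper subgroups of $G$ — this is immediate because $C_i=C_G(x_i)=G$ would make $x_i$ central, contradicting the choice of $x_i$. No genuine obstacle is expected; the whole argument is essentially a counting-of-classes observation combined with the ``no group is a union of two proper subgroups'' lemma.
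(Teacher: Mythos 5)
Your argument is correct: the case analysis on $|Cent(G)|\in\{1,2,3\}$, the observation that every element lies in its own centralizer so that $G$ would be covered by the proper centralizers, and the classical lemma that no group is the union of two proper subgroups together give a complete proof. The paper itself states this proposition without proof, citing it as Fact~2 of Belcastro--Sherman, and your argument is essentially the standard one found there, so there is nothing to reconcile.
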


\begin{proposition} \label{be}\cite[Fact 6]{be94}
Let $p$ be a prime. If $G/Z(G)\cong C_p\times C_p$ then $|Cent(G)|=p+2$.
\end{proposition}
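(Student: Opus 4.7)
\medskip

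\noindent\textbf{Proof plan.} The plan is to use the lattice of subgroups of $G/Z(G)\cong C_p\times C_p$ and pull it back to centralizers in $G$, exploiting the well-known fact that $C_G(x)/Z(G)$ is always a subgroup of $G/Z(G)$. Since $C_p\times C_p$ has exactly three types of subgroups — the trivial one, itself, and the $p+1$ subgroups of order $p$ — I expect exactly $p+2$ distinct centralizers: one for central elements (the whole group $G$) and $p+1$ coming from the subgroups of order $p$.

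First, I would record the correspondence. For every $x\in G$, $Z(G)\subseteq C_G(x)$, so $C_G(x)/Z(G)$ is a subgroup of $G/Z(G)$. If $x\in Z(G)$ then $C_G(x)=G$. If $x\notin Z(G)$, then $\overline{x}$ is a nontrivial element of $C_G(x)/Z(G)$, so $|C_G(x)/Z(G)|\ge p$; but also $C_G(x)\ne G$ (else $x\in Z(G)$), so $|C_G(x)/Z(G)|<p^{2}$, forcing $|C_G(x)/Z(G)|=p$. Thus every non-central centralizer quotient is one of the $p+1$ order-$p$ subgroups of $C_p\times C_p$.

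Next I would show that each of these $p+1$ subgroups is actually realized, and that different subgroups give different centralizers. Fix an order-$p$ subgroup $H/Z(G)$ of $G/Z(G)$ and pick $y\in H\setminus Z(G)$. Since $H/Z(G)$ is cyclic of order $p$ generated by $\overline{y}$, every $h\in H$ has the form $h=y^k z$ with $z\in Z(G)$, and a direct computation gives $hy=yh$. Hence $H\subseteq C_G(y)$, and combined with $|C_G(y)/Z(G)|=p$ from the previous step this yields $C_G(y)=H$. So each of the $p+1$ order-$p$ subgroups is a centralizer, and if two non-central elements had the same centralizer then so would their quotients, which are distinct order-$p$ subgroups — contradiction.

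Adding the centralizer $G$ (from central elements) to the $p+1$ centralizers produced in the previous paragraph gives $|Cent(G)|=p+2$. I do not anticipate any substantial obstacle: the only thing one must be careful about is that commutativity in $G/Z(G)$ does not automatically lift to $G$, which is why the argument in the third step uses the specific cyclic generator $\overline{y}$ rather than an abstract appeal to $H/Z(G)$ being abelian.
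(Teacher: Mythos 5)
Your proof is correct. The paper itself gives no proof of this proposition --- it is quoted as Fact~6 of \cite{be94} --- and your argument (every non-central centralizer has index $p$ modulo $Z(G)$, each of the $p+1$ order-$p$ subgroups of $G/Z(G)$ pulls back to the centralizer of any of its non-central elements via the explicit computation with the cyclic generator $\overline{y}$, and distinct subgroups give distinct centralizers) is exactly the standard proof of that fact, so nothing further is needed.
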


The following proposition can be directly obtained from \cite[Proposition 2.2]{ba13} and its proof.
\begin{proposition}\label{ba} Let $G$ be a non-abelian group, such that $[G : Z(G)] = p^3$, where $p$ is the smallest prime dividing $|G|$. If $[G:C_G(x)]=p^2$ for all $x\in G\setminus Z(G)$, then $|Cent(G)|=p^2+p+2$. Otherwise $|Cent(G)|=p^2+2$.
\end{proposition}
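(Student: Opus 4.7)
The plan is to separate according to the case split in the statement. For any $x\in G\setminus Z(G)$ one has $Z(G)\subsetneq C_G(x)\subsetneq G$, and since $[G:Z(G)]=p^{3}$ the index $[G:C_G(x)]$ must lie in $\{p,p^{2}\}$. Writing $\overline{G}=G/Z(G)$ and $\overline{x}=xZ(G)$, the key observation I rely on throughout is that whenever $[C_G(x):Z(G)]=p$ we have $C_G(x)=\langle x\rangle Z(G)$, so two such centralizers coincide iff $\langle \overline{x}\rangle=\langle \overline{y}\rangle$ as subgroups of $\overline{G}$.

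In the first case every non-central $x$ satisfies $[C_G(x):Z(G)]=p$; in particular, every non-identity element of $\overline{G}$ has order $p$. By the correspondence above, the distinct centralizers of non-central elements are in bijection with the order-$p$ subgroups of $\overline{G}$, of which there are $(p^{3}-1)/(p-1)=p^{2}+p+1$. Adding the centralizer $G$ produces $|Cent(G)|=p^{2}+p+2$.

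In the second case, fix $x_{0}\in G\setminus Z(G)$ with $[G:C_G(x_{0})]=p$ and set $M=C_G(x_{0})$. Then $M$ is normal in $G$ because its index is the smallest prime dividing $|G|$. Also $M$ is abelian: $x_{0}\in Z(M)\setminus Z(G)$ forces $Z(M)\supsetneq Z(G)$, so $[M:Z(M)]$ is a proper divisor of $[M:Z(G)]=p^{2}$, whence $M/Z(M)$ is cyclic and $M$ abelian. Consequently $C_G(y)=M$ for every $y\in M\setminus Z(G)$. For $y\notin M$, $x_{0}\notin C_G(y)$ gives $C_G(y)\ne M$; and if $[G:C_G(y)]=p$ as well, then $C_G(y)\cap M$ would have index $p^{2}$ in $G$, yet any element commuting with both $y$ and $M$ centralizes $G=\langle y\rangle M$ and hence lies in $Z(G)$, forcing $C_G(y)\cap M=Z(G)$ of index $p^{3}$, a contradiction. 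So $[G:C_G(y)]=p^{2}$ and $C_G(y)=\langle y\rangle Z(G)$. The centralizers arising here are in bijection with the order-$p$ subgroups of $\overline{G}$ not contained in $\overline{M}$; since $\overline{G}\setminus\overline{M}$ consists of $p^{3}-p^{2}$ elements, all of order $p$, and each such subgroup contains $p-1$ of them, there are $p^{2}$ such subgroups. Together with $M$ and $G$ this yields $|Cent(G)|=p^{2}+2$.

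I expect the main obstacle to be the intersection argument in the second case that rules out a second centralizer of index $p$; once $M$ is established as abelian and normal, the rest is a tally through the bijection between non-central centralizers and order-$p$ subgroups of $G/Z(G)$.
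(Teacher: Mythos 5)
Your argument is correct, and it is worth noting that the paper itself offers no proof of this proposition: it is imported wholesale from Baishya \cite{ba13}, with the remark that it ``can be directly obtained from Proposition 2.2 and its proof'' there. So your write-up is a self-contained derivation of a result the paper only cites. The skeleton is sound: since $Z(G)\subsetneq C_G(x)\subsetneq G$ and $[G:Z(G)]=p^3$, every non-central centralizer has index $p$ or $p^2$; when the index is $p^2$ you correctly identify $C_G(x)=\langle x\rangle Z(G)$ and set up the bijection with order-$p$ subgroups of $G/Z(G)$, which gives $(p^3-1)/(p-1)+1=p^2+p+2$ in the first case. In the second case your treatment of $M=C_G(x_0)$ is right ($M$ abelian via the $Z(M)\supsetneq Z(G)$ argument, $C_G(y)=M$ for all $y\in M\setminus Z(G)$), and the exclusion of a second index-$p$ centralizer is where the hypothesis that $p$ is the smallest prime dividing $|G|$ earns its keep: normality of $M$ gives $NM=G$, hence $[G:N\cap M]=p^2$, while $N\cap M\subseteq Z(G)$ forces index $p^3$. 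Two steps are stated tersely but hold up under inspection: that $[G:C_G(y)\cap M]$ is exactly $p^2$ (it is at most $p^2$ by Poincar\'e, and in fact the weaker bound $\leq p^2<p^3$ already yields the contradiction), and that every order-$p$ subgroup of $\overline{G}$ not contained in $\overline{M}$ meets $\overline{M}$ trivially, so the count $(p^3-p^2)/(p-1)=p^2$ is legitimate. No gaps.
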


For every element $x$ in $G$, define the set $\beta_G(x)=\{y\in G\mid C_G(y)=C_G(x)\}$. These sets form the parts of $\Upsilon_G$. Clearly $deg(x)=|G|-|\beta_G(x)|$ and $\beta_G(e)=Z(G)$. Also  $\overline{x}\subseteq \beta_G(x)$ for all $x\in G$. So $\beta_G(x)$ is a disjoint union of cosets of $Z(G)$. Also we conclude that $|Z(G)|$ divides $deg(x)$ for every $x\in G$, and $|Cent(G)|\leq [G:Z(G)]$. The next proposition shows that $\Upsilon_G$ is regular if and only if for each $x$ in $G$, $\beta_G(x)$ is a coset of $Z(G)$.      

\begin{proposition}\label{ereg1}
Let $G$ be a non-abelian group. Then $G$ is regular if and only if $\beta_G(x)=\overline{x}$ for all $x\in G$.
\end{proposition}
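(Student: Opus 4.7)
The plan is to exploit the identity $\deg(x) = |G|-|\beta_G(x)|$ already noted in the paragraph before the proposition, together with two other observations also recorded there: $\beta_G(e)=Z(G)$, and $\overline{x}\subseteq \beta_G(x)$ (so in particular $|\beta_G(x)|\ge |Z(G)|$ for every $x\in G$). The whole argument then reduces to a comparison of cardinalities, no deep group theory needed.

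For the forward direction, assume $G$ is regular. Apply the degree formula at the identity to get $\deg(e)=|G|-|Z(G)|$. Since the graph is regular, this common degree equals $|G|-|\beta_G(x)|$ for every $x\in G$, and therefore $|\beta_G(x)|=|Z(G)|$ for every $x$. Combining this with $\overline{x}\subseteq \beta_G(x)$ and $|\overline{x}|=|Z(G)|$, the inclusion must be an equality, so $\beta_G(x)=\overline{x}$ for all $x\in G$.

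For the converse, assume $\beta_G(x)=\overline{x}$ for every $x\in G$. Then $|\beta_G(x)|=|Z(G)|$ is the same integer for every vertex $x$, so $\deg(x)=|G|-|Z(G)|$ is constant on $G$, i.e.\ $\Upsilon_G$ is regular.

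There is no real obstacle here: the only subtlety is remembering that one is allowed to test regularity at the identity, which sets the common degree at $|G|-|Z(G)|$ and forces the reverse inclusion $\beta_G(x)\subseteq \overline{x}$. The hypothesis that $G$ is non-abelian is not actually needed for the equivalence itself (it is vacuously true in the abelian case since then $\Upsilon_G$ is the empty graph on $|G|$ vertices and $\beta_G(x)=G=\overline{x}$), but is presumably kept so that ``regular'' is non-trivial.
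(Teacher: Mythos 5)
Your proof is correct and follows essentially the same route as the paper: both arguments reduce regularity to the chain of equivalences $\deg(x)=\deg(e)$ iff $|\beta_G(x)|=|Z(G)|$ iff $\beta_G(x)=\overline{x}$, using the inclusion $\overline{x}\subseteq\beta_G(x)$ to upgrade the cardinality equality to set equality. Your closing remark about the abelian case is a harmless addition not present in the paper.
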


\begin{proof}
We know that $xZ(G)\subseteq\beta_G(x)$. So
\begin{align*}
  G \text{ is regular} &\Longleftrightarrow deg(x)=deg(e)&&\text{for all } x\in G\\
                              & \Longleftrightarrow |G|-|\beta_G(x)|=|G|-|Z(G)|&&\text{for all } x\in G\\
                              &\Longleftrightarrow |\beta_G(x)|=|Z(G)|=|\overline{x}| &&\text{for all } x\in G\\
                              &\Longleftrightarrow  \overline{x}=\beta_G(x)          &&\text{for all } x\in G \\            
  \end{align*} 
\end{proof}	

\begin{corollary}\label{ereg2}
Let $G$ be a non-abelian group. Then $G$ is regular if and only if  $|Cent(G)|= [G:Z(G)]$.
\end{corollary}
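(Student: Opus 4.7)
The plan is to deduce the corollary directly from Proposition \ref{ereg1} by a counting argument on the partition of $G$ induced by the equivalence relation ``having the same centralizer''. The key observation already established in the paragraph before Proposition \ref{ereg1} is that the sets $\beta_G(x)$ form the parts of the complete multipartite graph $\Upsilon_G$, so they partition $G$ into exactly $|Cent(G)|$ nonempty blocks, and each block $\beta_G(x)$ contains the coset $\overline{x} = xZ(G)$ of size $|Z(G)|$.

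For the forward direction, I would apply Proposition \ref{ereg1}: if $G$ is regular, then $\beta_G(x) = \overline{x}$ for every $x$, so every block of the partition has size exactly $|Z(G)|$. Summing sizes over the $|Cent(G)|$ blocks gives $|G| = |Cent(G)| \cdot |Z(G)|$, i.e.\ $|Cent(G)| = [G:Z(G)]$.

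For the converse, suppose $|Cent(G)| = [G:Z(G)]$. Since $\overline{x} \subseteq \beta_G(x)$, we have $|\beta_G(x)| \geq |Z(G)|$ for each $x$. Summing over the $|Cent(G)|$ parts of the partition,
\begin{equation*}
|G| \;=\; \sum_{C \in Cent(G)} |\beta_G(x_C)| \;\geq\; |Cent(G)| \cdot |Z(G)| \;=\; [G:Z(G)] \cdot |Z(G)| \;=\; |G|,
\end{equation*}
so equality holds throughout, forcing $|\beta_G(x)| = |Z(G)| = |\overline{x}|$ for every $x$. Combined with the inclusion $\overline{x} \subseteq \beta_G(x)$, this gives $\beta_G(x) = \overline{x}$, and Proposition \ref{ereg1} then yields that $G$ is regular.

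There is no real obstacle here: the corollary is a transparent reformulation of Proposition \ref{ereg1} via the pigeonhole/size-counting identity $|G| = \sum |\beta_G(x)|$ over the parts of the partition. The only thing one should be careful about is the well-known but unstated fact that distinct centralizers yield disjoint $\beta_G$-blocks (immediate from the definition $\beta_G(x) = \{y : C_G(y) = C_G(x)\}$), which is what licenses summing the sizes.
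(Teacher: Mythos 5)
Your proof is correct and is essentially the paper's argument: the paper also deduces the corollary from Proposition \ref{ereg1} by comparing the number of parts of $\Upsilon_G$ (which is $|Cent(G)|$) with the number of cosets of $Z(G)$, using the fact that each part $\beta_G(x)$ is a union of such cosets. You merely spell out the same pigeonhole counting via cardinality sums rather than via counting parts.
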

\begin{proof}
From Proposition \ref{ereg1}, we see that $G$ is regular if and only if the number of parts of $\Upsilon_G$ is equal to the number of distinct cosets of $Z(G)$. Hence the result. 
\end{proof}

 A group  $G$ is called $p-$group (where $p$ is a prime) if the order of $G$ is a power of $p$. A $p-$group that is the direct product of copies of $C_p$ is called elementary abelian $p-$group. Is it well known in group theory that a group is elementary abelian $2-$group if and only  if the order of each element in $G$ is exactly $2$. For convenience, we call a group $G$ elementary $p-$group if the order of each element in $G$ is exactly $p$.  With this convention, an elementary $p-$group is abelian if and only if it is the direct product of copies of $C_p$.  

\begin{theorem}\label{creg}
Let $G$ be a non-abelian group. If $G$ is regular then $G/Z(G)$ is an elementary abelian 2-group.
\end{theorem}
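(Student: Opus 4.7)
The plan is to apply Proposition \ref{ereg1} and then exploit the most basic symmetry of centralizers, namely that $C_G(x)=C_G(x^{-1})$ for every $x\in G$.

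First I would invoke Proposition \ref{ereg1}: since $G$ is regular, $\beta_G(x)=\overline{x}=xZ(G)$ for every $x\in G$. So any element with the same centralizer as $x$ must lie in the coset $xZ(G)$.

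Next I would observe that $x^{-1}$ always commutes with exactly the same elements as $x$, so $C_G(x^{-1})=C_G(x)$, i.e.\ $x^{-1}\in\beta_G(x)$. Combined with the previous step, this gives $x^{-1}\in xZ(G)$, hence $x^{2}\in Z(G)$ for every $x\in G$. In quotient language, every coset $\overline{x}\in G/Z(G)$ satisfies $\overline{x}^{\,2}=\overline{e}$, so $G/Z(G)$ is an elementary $2$-group in the sense defined just before the theorem.

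Finally I would note that a group in which every nontrivial element has order $2$ is automatically abelian (the standard one-line argument $\overline{x}\,\overline{y}=(\overline{y}\,\overline{x})^{-1}=\overline{y}\,\overline{x}$), so $G/Z(G)$ is elementary abelian $2$-group, which matches the terminology in the statement. There is really no serious obstacle here; the only delicate point is simply recognizing that Proposition \ref{ereg1} does all the work once one inputs the symmetry $C_G(x)=C_G(x^{-1})$.
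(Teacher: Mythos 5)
Your proposal is correct and follows essentially the same route as the paper: the paper applies Proposition \ref{ereg1} to get $\beta_G(x^{-1})=\overline{x}^{-1}$ and uses $x\in\beta_G(x^{-1})$ to conclude $x^2\in Z(G)$, which is the mirror image of your step $x^{-1}\in\beta_G(x)=\overline{x}$. Your closing remark that exponent $2$ forces commutativity is a standard fact the paper leaves implicit, so there is no substantive difference.
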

\begin{proof}
Suppose $ G$ is regular, and let $x\in G$. By Proposition \ref{ereg1}, we have $\beta_G(x^{-1})=\overline{x}^{-1}$. Since $x\in \beta_G(x^{-1})$, we get $x=x^{-1}z$ for some $z\in Z(G)$, and so $x^2=z \in Z(G)$. Therefore, $G/Z(G)$ is an elementary abelian 2-group.
\end{proof}

The converse of Theorem \ref{creg} is not true in general. For example,  the group $G=\langle a,b,c \mid a^4 = b^4 = c^2 = e, ab = ba ,cac^{-1} = a^{-1}, cbc^{-1} = b^{-1} \rangle$ is not regular even though $G/Z(G)\cong C_2\times C_2 \times C_2$. In fact, using GAP we find that for each one of the groups with ID's $[32,27], . . ., [32,35]$,   $G/Z(G)$ is isomorphic to $C_2\times C_2 \times C_2$, yet none of them is  regular. The next two theorems show two cases in which the covers of Theorem \ref{creg} is true.

\begin{theorem}\label{ccreg}
Let $G$ be a non-abelian group. If $G/Z(G)\cong C_2\times C_2$ then $G$ is regular.
\end{theorem}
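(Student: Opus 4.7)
The plan is to observe that this is a short consequence of two earlier results, Corollary \ref{ereg2} and Proposition \ref{be}, so no new structural argument is needed.

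First I would recall Corollary \ref{ereg2}: a non-abelian group $G$ is regular if and only if $|Cent(G)| = [G:Z(G)]$. Under the hypothesis $G/Z(G)\cong C_2\times C_2$, the right-hand side is simply $[G:Z(G)] = 4$. So the proof reduces to computing $|Cent(G)|$ and verifying it equals $4$.

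Next I would apply Proposition \ref{be} with the prime $p=2$. That proposition tells us directly that if $G/Z(G)\cong C_p\times C_p$ then $|Cent(G)| = p+2$. Specializing to $p=2$ gives $|Cent(G)| = 4$.

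Combining these two facts yields $|Cent(G)| = 4 = [G:Z(G)]$, so Corollary \ref{ereg2} immediately gives that $G$ is regular. There is no serious obstacle here; the only thing to check is that the hypotheses of both Proposition \ref{be} and Corollary \ref{ereg2} are satisfied, which is automatic because $G$ is non-abelian (guaranteed by the assumption that $G/Z(G)$ is the non-trivial group $C_2\times C_2$, which forces $Z(G)\neq G$).
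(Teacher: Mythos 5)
Your proof is correct and is essentially identical to the paper's own argument: both apply Proposition \ref{be} with $p=2$ to get $|Cent(G)|=4=[G:Z(G)]$ and then conclude via Corollary \ref{ereg2}. No further comment is needed.
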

\begin{proof}
Suppose that $G/Z(G)\cong C_2\times C_2$. Then by Proposition \ref{be},  $|Cent(G)|=4=[G:Z(G)]$. Hence by  Corollary  \ref{ereg2}, $G$ is regular. 
\end{proof}

\begin{example}\label{ex2}
For any positive integer $k\geq 3$, let $M(2^k)$ be the group defined by  \[M(2^k)=\langle a,b\mid a^{2^{k-1}}=b^2=1,\, bab=a^{2^{k-2}+1}\rangle.\] 
We have $Z(M(2^k))=\langle a^2 \rangle$. So $M(2^k)/Z(M(2^k))\cong C_2\times C_2$. Then $ M(2^k)$ is regular of degree $3\cdot 2^{k-2}$.
\end{example}

\begin{theorem}\label{ccreg}
Let $G$ be a non-abelian group such that  $G/Z(G)\cong C_2\times C_2\times C_2$. Then $G$ is regular if and only if  $[G:C_G(x)]= 4$ for all non central elements $x$ of $G$. 
\end{theorem}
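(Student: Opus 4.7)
The plan is to apply Proposition~\ref{ba} with $p=2$, since the hypothesis $G/Z(G)\cong C_2\times C_2\times C_2$ gives $[G:Z(G)]=8=2^3$. First I would verify the hypotheses of Proposition~\ref{ba}: the index condition $[G:Z(G)]=2^3$ is immediate, and since $|G|=8\,|Z(G)|$ is divisible by $8$, the smallest prime dividing $|G|$ is $p=2$. So Proposition~\ref{ba} applies.

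Next, I would invoke Proposition~\ref{ba} directly. With $p=2$ it yields a dichotomy: if $[G:C_G(x)]=4$ for every non-central $x$, then $|Cent(G)|=2^2+2+2=8$; otherwise $|Cent(G)|=2^2+2=6$.

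The final step is to combine this dichotomy with Corollary~\ref{ereg2}, which states that $G$ is regular if and only if $|Cent(G)|=[G:Z(G)]$. Here $[G:Z(G)]=8$, so regularity is equivalent to $|Cent(G)|=8$. By the dichotomy above, this in turn is equivalent to $[G:C_G(x)]=4$ for all non-central $x$, which is the desired conclusion.

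There is no serious obstacle: Proposition~\ref{ba} and Corollary~\ref{ereg2} together do all the work, and the theorem is essentially a corollary of these two earlier results. The only side remark worth keeping in mind is that for any non-central $x$, the subgroup $\langle x, Z(G)\rangle$ lies between $Z(G)$ and $C_G(x)$ with $[G:\langle x, Z(G)\rangle]=4$, so $[G:C_G(x)]\in\{2,4\}$; this makes it clear that the condition ``$[G:C_G(x)]=4$ for all non-central $x$'' is the correct companion to the ``otherwise'' case of Proposition~\ref{ba}, but it is background rather than a step in the argument itself.
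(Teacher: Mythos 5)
Your proof is correct and follows essentially the same route as the paper's: both rest on the dichotomy of Proposition~\ref{ba} with $p=2$ combined with the criterion $|Cent(G)|=[G:Z(G)]$ from Corollary~\ref{ereg2}. The only cosmetic difference is that the paper argues the forward direction by contradiction (ruling out $[G:C_G(x)]=2$), whereas you read both directions off the dichotomy at once.
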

\begin{proof}
 Suppose $G$ is regular. Then $|Cent(G)|=[G:Z(G)]=8$. Now for each $x$ in $G\setminus Z(G)$,  $[G:Z(G)]=[G:C_G(x)][C_G(x):Z(G)]$, and so $[G:C_G(x)]\neq 1, 8$. If $[G:C_G(x)]=2$, for some $x\in G $, then Proposition \ref{ba} implies that  $|Cent(G)|=6$, a contradiction. So $[G:C_G(x)]= 4$ for all $x\in G\setminus Z(G)$. Now for the converse, assume that $[G:C_G(x)]= 4$ for all $x\in G\setminus Z(G)$. Then Proposition \ref{ba} implies that $|Cent(G)|=8=[G:Z(G)]$. Hence, by Corollary \ref{ereg2}, $G$ is regular. 
\end{proof}

\begin{theorem}\label{ncen}
Let $G$ be a regular group. Then for each $x \in G$,  $C_G(x)$ is normal in $G$ and $G/C_G(x)$ is isomorphic to a subgroup of $Z(G)$.
\end{theorem}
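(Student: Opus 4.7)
The plan is to exploit two consequences of regularity already established in the excerpt: Theorem \ref{creg} tells us that $G/Z(G)$ is an elementary abelian $2$-group, and in particular it is abelian, so the commutator subgroup $[G,G]$ lies in $Z(G)$. This single fact will quickly deliver both conclusions.

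For the normality of $C_G(x)$, I would simply note that $Z(G)\subseteq C_G(x)$, so $C_G(x)/Z(G)$ is a subgroup of the abelian group $G/Z(G)$, hence normal there, and pulling back gives $C_G(x)\trianglelefteq G$.

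For the isomorphic embedding of $G/C_G(x)$ into $Z(G)$, the key step is to introduce the commutator map
\[
\varphi_x : G \longrightarrow Z(G), \qquad \varphi_x(g)=[x,g]=xgx^{-1}g^{-1}.
\]
The target is indeed $Z(G)$ because $[G,G]\subseteq Z(G)$. To check that $\varphi_x$ is a homomorphism I would use the conjugation identity $xgx^{-1}=g\,[x,g]$ together with the centrality of $[x,g]$ and $[x,h]$ to compute
\[
x(gh)x^{-1}=(xgx^{-1})(xhx^{-1})=g[x,g]\,h[x,h]=gh\,[x,g][x,h],
\]
which, compared with $x(gh)x^{-1}=gh\,[x,gh]$, gives $[x,gh]=[x,g][x,h]$. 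The kernel of $\varphi_x$ is exactly $\{g\in G : xg=gx\}=C_G(x)$, and the first isomorphism theorem yields $G/C_G(x)\cong \mathrm{Im}(\varphi_x)\leq Z(G)$.

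I do not expect any serious obstacle: the whole argument is essentially a standard "commutators are central imply commutator-with-$x$ is a homomorphism" observation. The only point one has to be a little careful with is the direction of the commutator identity and the use of centrality when rearranging the product $g[x,g]\,h[x,h]$, but once this is written out cleanly the two claims follow immediately.
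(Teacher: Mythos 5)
Your proof is correct and follows essentially the same route as the paper: both arguments hinge on Theorem \ref{creg} to place the commutator $[x,g]$ in $Z(G)$ and then realize $G/C_G(x)$ as the image of the homomorphism $g\mapsto [x,g]$ with kernel $C_G(x)$ (your $\varphi_x(g)$ coincides with the paper's $z_{xg}$ since the commutator is central). The only cosmetic difference is in the normality step, where the paper invokes $\beta_G(x)=\overline{x}$ to get $C_G(g^{-1}xg)=C_G(x)$ while you use the correspondence with the abelian quotient $G/Z(G)$ (or, just as well, the fact that $C_G(x)=\ker\varphi_x$); both are valid.
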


\begin{proof}
 Suppose $G$ is regular, and let $x \in G$. Since $G/Z(G)$ is abelian, we can associate with every $g \in G$ a unique element $z_{xg}$  in $Z(G)$ such that $xg=gxz_{xg}$. By Proposition \ref{ereg1}, we have that $C_G(xz_{xg})=C_G(x)$. So we obtain that 
\[g^{-1}C_G(x)g=C_G(g^{-1}xg)=C_G(g^{-1}gxz_{xg})=C_G(xz_{xg})=C_G(x).\] Hence $C_G(x)$ is normal in $G$. Also the mapping $\phi_x:G\longrightarrow Z(G)$ defined by $\phi_x(g)=z_{xg}$  is a homomorphism with $Ker(\phi_x)=C_G(x)$. Thus $G/C_G(x)\cong Img(\phi_x)$.   
\end{proof}

The following corollary proves Conjecture \ref{tconj}.

\begin{corollary}\label{preg}
Let  $G$ be a non-abelian group. Then $\Upsilon_G$ is not $n-$regular, where $n$ is a prime power integer.
\end{corollary}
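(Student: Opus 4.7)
The plan is to combine Theorem \ref{creg}, Proposition \ref{ereg1}, and Theorem \ref{ncen} to show that regularity together with $n$ being a prime power forces $G$ to be abelian, contradicting the hypothesis. First I would record the ``degree formula'': since $G$ is regular, Proposition \ref{ereg1} gives $\beta_G(x)=\overline{x}$, so every vertex has degree exactly $|G|-|Z(G)|$. By Theorem \ref{creg}, $G/Z(G)$ is an elementary abelian $2$-group, hence $[G:Z(G)]=2^k$ for some $k\geq 2$ (the bound $k\geq 2$ coming from $G$ being non-abelian, since $G/Z(G)$ cannot be cyclic). Writing $n=|Z(G)|(2^k-1)$ sets up the arithmetic obstruction.

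Next I would split on the prime $p$. If $p=2$, then $n=2^m$ and the odd factor $2^k-1\geq 3$ would have to equal $1$, which is impossible. If $p$ is odd, the equation $|Z(G)|(2^k-1)=p^m$ forces $|Z(G)|$ (and $2^k-1$) to be a power of the odd prime $p$; in particular $|Z(G)|$ has odd order.

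The key step — and the main obstacle to making the argument clean — is to rule out the odd-prime case. For this I would invoke Theorem \ref{ncen}: for any $x\in G$, the quotient $G/C_G(x)$ embeds into $Z(G)$, so its order divides $|Z(G)|$ and is thus a power of the odd prime $p$. On the other hand, $Z(G)\subseteq C_G(x)$, so $G/C_G(x)$ is a quotient of $G/Z(G)$, which is a $2$-group. The only group that is simultaneously a $p$-group and a $2$-group is trivial, forcing $C_G(x)=G$ for every $x\in G$. This makes $G$ abelian, contradicting the standing hypothesis that $G$ is non-abelian, and completes the proof.
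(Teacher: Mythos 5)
Your proof is correct and follows essentially the same route as the paper: both rest on the factorization $n=|Z(G)|([G:Z(G)]-1)$, use Theorem~\ref{creg} to make the second factor odd and at least $3$, and use Theorem~\ref{ncen} to force $|Z(G)|$ to be even (you phrase this contrapositively, showing an odd $|Z(G)|$ would make every centralizer all of $G$, which is exactly the mechanism behind the paper's one-line assertion that $|Z(G)|$ is even). The case split on $p$ is a stylistic difference only.
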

\begin{proof}
We know that  $n=|G|-|Z(G)|=|Z(G)|([G:Z(G)]-1)$. By Theorem \ref{creg} and Theorem \ref{ncen} we get that $[G:Z(G)]-1$ is odd and $|Z(G)|$ is even. Hence $n$ can not be a power of prime.    
\end{proof}

\begin{theorem}\label{bound}
If $G$ is a non-abelian group such that $\Upsilon_G$ is $n-$regular, then $n$ is even, $|G|\equiv 0 \pmod{8}$, and $n+2\leq |G| \leq 4n/3$.
\end{theorem}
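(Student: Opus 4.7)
The plan is to combine Theorem \ref{creg} with Theorem \ref{ncen} to pin down the $2$-adic structure of $|G|$, and then derive the numerical inequalities from the trivial identity $n=|G|-|Z(G)|$.

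First I would record that $n = \deg(e) = |G|-|\beta_G(e)| = |G|-|Z(G)|$, so every assertion about $n$ translates into one about $|Z(G)|$ and $[G:Z(G)]$. By Theorem \ref{creg}, $G/Z(G)$ is an elementary abelian $2$-group, and since $G$ is non-abelian, $G/Z(G)$ is non-cyclic, hence $[G:Z(G)] = 2^k$ with $k \geq 2$. This alone gives the upper bound: $4|Z(G)| \leq |G|$ rearranges to $|G| \leq \frac{4(|G|-|Z(G)|)}{3} = \frac{4n}{3}$.

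The key step, and the only mildly delicate one, is showing $|Z(G)|$ is even. Pick any non-central $x \in G$. By Theorem \ref{ncen}, $G/C_G(x)$ embeds in $Z(G)$; on the other hand, $G/C_G(x)$ is a quotient of $G/Z(G)$, hence itself an elementary abelian $2$-group. Because $x \notin Z(G)$, this quotient is non-trivial, so $Z(G)$ contains an element of order $2$ and $|Z(G)|$ is even. (The existence of a non-central $x$ is guaranteed since $G$ is non-abelian.)

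With $|Z(G)|$ even and $[G:Z(G)] = 2^k$ for $k \geq 2$, the remaining conclusions are immediate: $n = |Z(G)|\bigl(2^k - 1\bigr)$ is even since $|Z(G)|$ is; $|G| = |Z(G)| \cdot 2^k$ is divisible by $2 \cdot 4 = 8$; and $n+2 \leq |G|$ is equivalent to $|Z(G)| \geq 2$, which holds because $|Z(G)|$ is even. The main obstacle, as noted, is the evenness of $|Z(G)|$, which is where Theorem \ref{ncen} does the nontrivial work; everything else is bookkeeping.
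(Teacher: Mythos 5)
Your proof is correct and follows essentially the same route as the paper: Theorem \ref{creg} gives $[G:Z(G)]=2^k$ and Theorem \ref{ncen} forces $|Z(G)|$ to be even, after which everything is arithmetic on $n=|G|-|Z(G)|$. The only cosmetic difference is that you get $k\geq 2$ from the standard fact that $G/Z(G)$ cannot be cyclic, whereas the paper invokes Proposition \ref{be0} together with Corollary \ref{ereg2}; both are valid, and your spelled-out argument for the evenness of $|Z(G)|$ supplies detail the paper leaves implicit.
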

\begin{proof}
Corollary \ref{ereg2} yields $[G:Z(G)]$ is dividable by $4$, and Theorem \ref{ncen} implies that $|Z(G)|$ is even. Hence $|G|$ is divisible by $8$. Since $|G|=\frac{[G:Z(G)]n}{[G:Z(G)]-1}$, form Proposition \ref{be0} we get $[G:Z(G)]\geq 4$, and so we obtain that $|G| \leq 4n/3$. In addition,  $2\leq |Z(G)|=|G|-n$. Therefor  $n+2\leq|G| \leq 4n/3$. 
\end{proof}

\begin{theorem}\label{big} Let $k\geq3$ and $t\geq1$ be integers and let $G$ be a group of order $2^k(2t-1)$. Then   $G$ is regular if and only if $G\cong H\times A$ where $A$ is an abelian group of order $2k-1$, $H$ is a regular $2-$group of order $2^k$. 
\end{theorem}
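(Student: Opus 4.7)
The plan is to prove both implications separately: the converse direction is essentially a direct computation with centralizers in a direct product, while the forward direction relies on extracting a central odd Hall subgroup and applying Schur--Zassenhaus.

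For the converse, I would suppose $G \cong H \times A$ with $A$ abelian and $H$ a regular $2$-group. The key observation is that centralizers in a direct product decompose coordinatewise: for every $(h,a) \in G$ one has $C_G((h,a)) = C_H(h) \times A$, and consequently $Z(G) = Z(H) \times A$ and $\beta_G((h,a)) = \beta_H(h) \times A$. By Proposition \ref{ereg1} applied to the regular group $H$, $\beta_H(h) = hZ(H)$ for every $h \in H$, hence $\beta_G((h,a)) = (h,a)Z(G)$ for every $(h,a) \in G$. Invoking Proposition \ref{ereg1} a second time, in the other direction, yields that $G$ is regular.

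For the forward direction, assume $G$ is regular of order $2^k(2t-1)$ with $k \geq 3$. By Theorem \ref{creg}, $G/Z(G)$ is an elementary abelian $2$-group, so $[G:Z(G)]$ is a power of $2$; combined with $|G| = 2^k(2t-1)$ this forces the odd part of $|Z(G)|$ to equal $2t-1$. Since $Z(G)$ is abelian, write $Z(G) = B \times A$ where $B$ is the Sylow $2$-subgroup of $Z(G)$ and $A$ is the Hall odd subgroup, of order $2t-1$. Because $A$ is characteristic in $Z(G)$ and $Z(G) \trianglelefteq G$, we obtain $A \trianglelefteq G$; moreover $A \subseteq Z(G)$ and $\gcd(|A|, |G/A|) = \gcd(2t-1, 2^k) = 1$, so $A$ is a normal Hall subgroup of $G$. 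The Schur--Zassenhaus theorem then furnishes a complement $H \leq G$ with $|H| = 2^k$ and $G = HA$, and centrality of $A$ upgrades this to a direct product $G = H \times A$. Running the centralizer computation from the first paragraph in reverse shows regularity of $G$ forces $\beta_H(h) = hZ(H)$ for every $h \in H$, so $H$ is a regular $2$-group of order $2^k$.

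The main obstacle, modest as it is, lies in recognizing that the hypotheses already concentrate all odd-order elements of $G$ inside $Z(G)$: any $x \in G$ of odd order projects to an odd-order element of the $2$-group $G/Z(G)$, hence is central. This is what exposes the odd Hall subgroup as a normal, central Hall subgroup and legitimizes the Schur--Zassenhaus step; after that, the transfer of regularity between $G$ and $H$ is an automatic consequence of the coordinatewise behaviour of centralizers in a direct product with an abelian factor.
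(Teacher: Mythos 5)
Your proposal is correct and follows essentially the same route as the paper: both directions rest on the coordinatewise decomposition of centralizers in $H\times A$ together with Proposition \ref{ereg1}, and the forward direction extracts the central odd Hall subgroup $A$ of $Z(G)$ (using Theorem \ref{creg} to see that $[G:Z(G)]$ is a power of $2$) and complements it by a Sylow $2$-subgroup. The only cosmetic difference is your appeal to Schur--Zassenhaus, which is unnecessary here since a Sylow $2$-subgroup $H$ already satisfies $H\cap A=\{e\}$ and $|HA|=|G|$ because $A$ is central of odd order.
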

\begin{proof}
Suppose $G$ is regular. Then $G/Z(G)$ is a $2-$group, which yields  $|Z(G)|=2^s(2t-1)$ for some $1\leq s \leq k$. Let $A$ be the subgroup of $Z(G)$ of order $2t-1$ and let $H$ be a $2-$sylow subgroup of $G$. Now $HA=AH$. So $HA\leq G$. Also, we have $H\cap A=\{e\}$ (this is because $H$ is a $2-$ group and $|A|$ is odd). Thus $|HA|=|H||A|=|G|$, and so $G=HA$. Moreover, since $ha=ah$ for all $a\in A$ and $h\in H$, we have $HA\cong H\times A$. It remains to show that $H$ is regular. Since $A\leq Z(G)$, we obtain that for any two elements $x,y\in H$,  $C_G(x)=C_G(y)$ if and only if $C_H(x)=C_H(y)$. So, for each $x\in H$,  $\beta_H(x)=\beta_G(x)\cap H$. Now $Z(G)=Z(H)A$, also by Proposition \ref{ereg1}, we have $\beta_G(x)=\overline{x}$. Therefore
\[\beta_H(x)= \beta_G(x)\cap H=\overline{x}\cap H=xZ(H)A\cap H= xZ(H).\] 
Then  Proposition \ref{ereg1} yields regularity of $H$.

Now we prove the converse. Assume that $G=H\times A$ where $A$ is an abelian and $H$ is regular $2-$group. We have $Z(G)=Z(H)\times Z(A)=Z(H)\times A$. Also, for each $(h,a)\in G$, $C_G(h,a)=C_H(h)\times C_A(a)=C_H(h)\times A$. Combining this we obtain 
\[\beta_G(h,a)=\beta_H(h)\times A=hZ(H)\times aA=(h,a)(Z(H)\times A)=(h,a)Z(G).\] 
Therefore, by Proposition \ref{ereg1} $G$, is regular.
\end{proof}

\begin{example} Let $A$ be an abelian group of order $k\geq1$ and let  $G=D_8\times A$ or  $G=Q_8\times A$. Then $\Upsilon_G$ is $K_{2k,2k,2k,2k}$ which is $6k-$regular. \end{example}

We get the following two remarks from Theorem \ref{big} and its proof.
\begin{remark}
The sylow subgroup $H$ is normal in $G$, and the abelian subgroup $A$ is the direct product of all other sylow subgroups. So all sylow subgroups of $G$ are normal, and $G$ is isomorphic to the direct product of its sylow subgroups. 
\end{remark}

\begin{remark}
Characterizing regular groups reduces to studying regular $2-$groups that are not the direct product of a regular group and an abelian group. 
\end{remark}

\begin{definition}
A regular $2-$group is called reduced regular if it is not the direct product of a regular group and an abelian group.   
\end{definition}

Using Theorem \ref{bound}, we can find, for a fixed value of $n$, all $n-$regular and reduced $n-$regular groups. For example, for $n=6$, Theorem \ref{bound} implies that $|G|$ must be $8$. Among the groups of order $8$ we find that  $Q_8$, $D_8$ are the only $6-$regular groups and both of them are  reduced regular. For the case $n=10$, we obtain that $|G|=12$. But also $|G|$ must be divisible by $8$.  So there no  $10-$regular groups. For $n=12$ we find that $|G|=16$. After investigating the groups of order $16$, we find that six of them are $10-$regular, and four out of these six are reduced $10-$regular. Table \ref{tab1} lists all reduced $n-$regular $2-$groups with  $n\leq 60$. 

\begin{table}[h]
\caption{}
\centering 
\begin{tabular}{|c|l|}
\hline
$n$&  Reduced $n-$Regular $2-$Groups \\
\hhline{|=|=|}

6&  $Q_8$, $D_8$\\\hline
12& [16,3], [16,4], [16,6], [16,13]\\\hline
24& [32,2], [32,4], [32,5], [32,12], [32,17],  [32,24], [32,38]\\\hline
30&[32,49], [32,50] \\\hline
48&[64,3], [64,17], [64,27], [64,29], [64,44],\\
  &[64,51], [64,57], [64, 86], [64, 112], [64, 185] \\\hline
56&[64, 73], [64, 74], [64, 75], [64, 76], [64, 77] \\\hline
&[64, 78], [64, 79], [64, 80], [64, 81],[64, 82]\\\hline
60&[64, 199], [64, 200], [64, 201], [64, 226], [64, 227],\\
  &[64, 228], [64, 229], [64, 230], [64, 231], [64, 232],\\
	&[64, 233], [64, 234], [64, 235], [64, 236], [64, 237], \\
	&[64, 238], [64, 239], [64, 240], [64, 249], [64, 266] \\\hline
\end{tabular}
\label{tab1}
\end{table}

\section{Regularity of $\Upsilon_{G\setminus Z(G)}$}\label{ireg}

\begin{definition}\label{mc}
Let $G$ be a non-abelian group. A centralizer $C_G(x)$ is called maximal if it is not contained in any other proper centralizer. 
\end{definition} 

\begin{proposition}\label{lg}
Let $G$ be a group. If  $C_G(x)$ is a maximal centralizer in $G$, then $\beta_G(x)\cup Z(G)$ is a subgroup of $G$.
\end{proposition}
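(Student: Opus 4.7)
My plan is to verify directly that $H := \beta_G(x)\cup Z(G)$ is closed under multiplication and inversion. The inverse part is immediate: for $a\in\beta_G(x)$ one has $C_G(a^{-1})=C_G(a)=C_G(x)$, so $a^{-1}\in\beta_G(x)$, while $Z(G)$ is clearly closed under inversion. So the substantive content is closure under the product.

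For closure under multiplication, I would split into cases according to which of $\beta_G(x)$, $Z(G)$ the two factors lie in. If either factor is central, the result is immediate because multiplying by a central element does not change the centralizer, so $C_G(ab)=C_G(a)$ in that case. The interesting case is $a,b\in\beta_G(x)$, so that $C_G(a)=C_G(b)=C_G(x)$. Here the key algebraic observation is the standard inclusion
\[
C_G(ab)\supseteq C_G(a)\cap C_G(b)=C_G(x),
\]
since anything commuting with both $a$ and $b$ commutes with their product. Now I invoke the maximality hypothesis: $C_G(x)$ is contained in the centralizer $C_G(ab)$, and maximality forces either $C_G(ab)=C_G(x)$ or $C_G(ab)=G$. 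In the first subcase $ab\in\beta_G(x)$, and in the second $ab\in Z(G)$; either way $ab\in H$.

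The main (only) obstacle is making sure that maximality is applied correctly: the definition forbids $C_G(x)$ from being strictly contained in any \emph{proper} centralizer, and $C_G(ab)$ is itself a centralizer, so the dichotomy ``$C_G(ab)=C_G(x)$ or $C_G(ab)=G$'' genuinely follows. Once that point is understood, the proof is just the case analysis above, and since $H$ is a nonempty finite subset of $G$ closed under multiplication and inversion, $H\leq G$.
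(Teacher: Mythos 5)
Your proof is correct and follows essentially the same route as the paper's: closure under inversion via $C_G(a)=C_G(a^{-1})$, and closure under products via the inclusion $C_G(x)=C_G(a)\cap C_G(b)\subseteq C_G(ab)$ together with the maximality dichotomy $C_G(ab)=C_G(x)$ or $C_G(ab)=G$. The only difference is that you spell out the trivial cases involving central factors, which the paper leaves implicit.
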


\begin{proof}
Since $C_G(y)=C_G(y^{-1})$, we get that $y\in B_G(x)\cup Z(G)$ if and only if  $y^{-1}\in \beta_G(x)\cup Z(G)$. Now, let $y,w\in \beta_G(x)$. Since $C_G(y)=C_G(x)=C_G(w)$, we get $C_G(x)\subseteq C_G(yw)$. So by maximality of $C_G(x)$, we obtain that $C_G(x)=C_G(yw)$ or $C_G(yw)=G$. In both cases $yw$ belongs to $\beta_G(x)\cup Z(G)$. Therefore  $\beta_G(x)\cup Z(G)$ is a subgroup.
\end{proof}

\begin{proposition}\label{lg1}
Let $G$ be an induced regular group, and let  $C_G(x)$ be a maximal centralizer such that $C_G(x)\neq \beta_G(x)\cup Z(G)$. Then there is an element $y$ in $C_G(x)\setminus \beta_G(x)$ such that $o(\overline{y})$ is prime.
\end{proposition}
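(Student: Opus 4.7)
Set $H := \beta_G(x)\cup Z(G)$, a subgroup of $G$ by Proposition \ref{lg}. I first identify $H$ with $Z(C_G(x))$: any $h\in\beta_G(x)$ has $C_G(h)=C_G(x)$, so every element of $C_G(x)$ commutes with $h$, giving $H\subseteq Z(C_G(x))$; conversely, for $g\in Z(C_G(x))\setminus Z(G)$ we have $C_G(x)\subseteq C_G(g)$, and maximality of $C_G(x)$ forces $C_G(g)=C_G(x)$, so $g\in\beta_G(x)$. Hence $Z(C_G(x))=H$ is abelian, and $C_G(x)/H$ is a nontrivial group because $C_G(x)\neq H$.

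By Cauchy's theorem applied to $C_G(x)/H$ there is a prime $p$ and an element $y_0\in C_G(x)\setminus H$ with $y_0^p\in H$. If $y_0^p\in Z(G)$ then $o(\overline{y_0})=p$ in $G/Z(G)$ and $y=y_0$ works. Otherwise $y_0^p\in\beta_G(x)$; let $m=o(\overline{y_0^p})$, so $o(\overline{y_0})=pm$ with $m\ge 2$. If $\gcd(p,m)=1$, Bezout gives integers $s,t$ with $sp+tm=1$, and if the order-$p$ element $\overline{y_0}^m$ lay in $\overline H := H/Z(G)$ then $\overline{y_0}=(\overline{y_0}^m)^t(\overline{y_0}^p)^s\in\overline H$, contradicting $y_0\notin H$; so $y=y_0^m$ works.

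The remaining case is $p\mid m$. Since $H\subseteq Z(C_G(x))$, the element $y_0$ commutes with every $h\in H$, so $(y_0 h^{-1})^p=y_0^p h^{-p}$; if $h\in H$ can be chosen with $\overline{h^p}=\overline{y_0^p}$ in $\overline H$, then $y=y_0 h^{-1}\in C_G(x)\setminus H$ has $o(\overline y)=p$. Thus it suffices to show $\overline{y_0^p}$ is a $p$-th power in the finite abelian group $\overline H$. When $p \nmid |\overline H|$ the $p$-th power map is surjective on $\overline H$ and the required $h$ exists automatically, so by choosing the Cauchy prime $p$ appropriately we may reduce to the situation where every prime dividing $|C_G(x)/H|$ also divides $|\overline H|$.

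The main obstacle is closing this last scenario. Here induced regularity enters via $|\beta_G(y_0)|=|\beta_G(x)|$ together with the strict inclusions $H\subsetneq C_G(y_0)\subsetneq C_G(x)$ (the second holds because $y_0^p\in\beta_G(x)$ implies $C_G(y_0)\subseteq C_G(y_0^p)=C_G(x)$, with $y_0\notin\beta_G(x)$ forcing strict inequality). These size constraints on $C_G(y_0)$ and $\overline H$ should combine, through an arithmetic argument on $p$-th powers in $\overline H$, to yield either $\overline{y_0^p}\in(\overline H)^p$ or an alternative prime-order candidate outside $\overline H$; I expect this final combinatorial step to be the most delicate part of the argument.
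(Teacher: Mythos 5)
Your proposal has a genuine gap, and it sits exactly where the content of the proposition lives. The correct portion of your argument (the identification $H=Z(C_G(x))$, Cauchy's theorem applied to $C_G(x)/H$, the case $y_0^p\in Z(G)$, and the case $\gcd(p,m)=1$) is purely group-theoretic and never uses the hypothesis that $G$ is induced regular. The case you defer, $p\mid m$ where $m=o(\overline{y_0^p})$, is not a routine leftover: there the prime-order element of $\langle\overline{y_0}\rangle$ is $\overline{y_0}^{\,m}=(\overline{y_0}^{\,p})^{m/p}$, which already lies in $\overline H$, so no power of $y_0$ can serve as the desired $y$; and your substitute $y=y_0h^{-1}$ requires $\overline{y_0^p}\in(\overline H)^p$, which nothing in your setup guarantees (for instance, nothing rules out $\overline{y_0}$ of order $p^3$ with $\overline{y_0}^{\,p}$ of order $p^2$ while $\overline H$ has exponent $p^2$, in which case $\overline{y_0^p}$ is not a $p$-th power in $\overline H$). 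You state that induced regularity together with $|\beta_G(y_0)|=|\beta_G(x)|$ and $H\subsetneq C_G(y_0)\subsetneq C_G(x)$ "should" close this case, but no argument is given, so the proof is incomplete.

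For comparison, the paper avoids constructing a prime-order element at all and instead argues by contradiction: assume every $y\in C_G(x)\setminus\beta_G(x)$ has $o(\overline{y})$ composite. Then for any $w\in C_G(x)\setminus H$, the power $w^{o(\overline{w})/p}$ (for $p$ a prime divisor of $o(\overline{w})$) has prime order modulo $Z(G)$ and lies in $C_G(x)$, hence is forced into $\beta_G(x)$, giving $C_G(w)\subseteq C_G(w^{o(\overline{w})/p})=C_G(x)$. From this one shows $y\beta_G(x)\subseteq\beta_G(y)$ for $y\in C_G(x)\setminus H$, and induced regularity upgrades the inclusion to the equality $y\beta_G(x)=\beta_G(y)$; since $y\in\beta_G(y)$ this yields $1\in\beta_G(x)$, a contradiction. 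Note that the regularity hypothesis enters only through this counting step, which is precisely the mechanism your proposal is missing. If you want to salvage your constructive approach, you would need to import an argument of this kind into your final case rather than an arithmetic statement about $p$-th powers in $\overline H$, which appears to be false in general.
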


\begin{proof}
Assume that for each $y\in C_G(x)\setminus \beta_G(x)$, $o(\overline{y})$ is not prime. Then for each $w\in C_G(x)\setminus(\beta_G(x)\cup Z(G))$, $C_G(w)\subseteq C_G(x)$. Indeed, take a prime divisor $p$ of $o(\overline{w})$, then $w^{\frac{o(\overline{w})}{p}}\in \beta_G(x)$. Hence $C_G(w)\subseteq C_G(w^{\frac{o(\overline{w})}{p}})= C_G(x)$. Now let $y\in C_G(x)\setminus \beta_G(x)\cup Z(G)$. We claim that $y\beta_G(x)\subseteq \beta_G(y)$.  By Proposition \ref{lg},  $\beta_G(x)\cup Z(G)$ is a group. This implies that for all $g\in \beta_G(x)$, $yg\notin \beta_G(x)\cup Z(G)$. Now let $g\in \beta_G(x)$, we want to show that $C_G(gy)=C_G(y)$. Clearly $C_G(g)\cap C_G(y)\subseteq C_G(gy)$, and so $C_G(y)=C_G(x)\cap C_G(y)=C_G(g)\cap C_G(y)\subseteq C_G(gy)$. For the other inclusion, let $h\in C_G(gy)$, then $hgy=gyh$. But also $h$ is in $C_G(x)=C_G(g)$, and so $ghy=gyh$, which yields $h\in C_G(y)$.  This proves our claim. Since $G$ is induced regular group, we get that $|y\beta_G(x)|=|\beta_G(y)|$, and so $y\beta_G(x)=\beta_G(y)$. This implies that $1\in \beta_G(x)$, a contradiction. Therefore, there must be  $y \in C_G(x)\setminus \beta_G(x)$ such that $o(\overline{y})$ is prime.  
\end{proof}

\begin{proposition}\label{lg2}
Let $G$ be an induced regular group, and let $C_G(x)$ be a maximal centralizer in $G$. If there exists an element $y$ in $C_G(x)\setminus \beta_G(x)$ such that $o(\overline{y})$ is a prime $p\neq2$, then $\dfrac{\beta_G(x)\cup Z(G)}{Z(G)}$ is an elementary $p-$ group.
\end{proposition}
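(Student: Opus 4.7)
The plan is to show that every $w\in\beta_G(x)$ satisfies $w^p\in Z(G)$; combined with the fact that $H:=\beta_G(x)\cup Z(G)$ is an abelian subgroup (Proposition~\ref{lg} together with the observation that elements of $\beta_G(x)$ share the centralizer $C_G(x)\supseteq\beta_G(x)$, hence commute pairwise), this yields that $H/Z(G)$ has exponent dividing $p$, i.e.\ is an elementary $p$-group. The basic observations I will use are: $y$ centralizes every element of $H$ (as $y\in C_G(x)=C_G(w)$ for every $w\in\beta_G(x)$, and $y$ centralizes $Z(G)$ trivially); $y\notin H$ (since $y\notin\beta_G(x)$ and $o(\overline{y})=p\neq 1$ forces $y\notin Z(G)$); and, by the maximality of $C_G(x)$, for any $w\in\beta_G(x)$ and positive integer $k$ we have $w^k\in H$, because $C_G(w^k)\supseteq C_G(w)=C_G(x)$ must equal $C_G(x)$ or $G$.

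Suppose for contradiction that some $w\in\beta_G(x)$ has $w^p\notin Z(G)$; then $w^p\in\beta_G(x)$ by the last observation. Set $v:=yw$. Since $y$ and $w$ commute and $y^p\in Z(G)$, $v^p=y^p w^p\in\overline{w^p}\subseteq\beta_G(x)$, while $v\in C_G(x)\setminus H$ (because $y\notin H$ and $w\in H$). Consequently $C_G(v)\subseteq C_G(v^p)=C_G(x)$. I then follow the strategy of the proof of Proposition~\ref{lg1}: establish $v\,\beta_G(x)\subseteq\beta_G(v)$, then invoke the induced regularity $|\beta_G(v)|=|\beta_G(x)|=|v\,\beta_G(x)|$ to force the equality $v\,\beta_G(x)=\beta_G(v)$. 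From $v\in\beta_G(v)=v\,\beta_G(x)$ it follows that $e\in\beta_G(x)$, contradicting $\beta_G(x)\cap Z(G)=\emptyset$.

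For the crucial inclusion $v\,\beta_G(x)\subseteq\beta_G(v)$, fix $g\in\beta_G(x)$. Since $v$ and $g$ commute (both lie in the abelian $\langle H,y\rangle$) and $C_G(v)\subseteq C_G(x)=C_G(g)$, we obtain $C_G(v)\subseteq C_G(vg)$ immediately. The reverse inclusion reduces, via the standard commutation argument (any $h\in C_G(vg)\cap C_G(g)$ automatically lies in $C_G(v)$), to showing $C_G(vg)\subseteq C_G(x)$. This last step is the main obstacle. The natural tool is to compute $(vg)^p=v^p g^p\in\beta_G(x)\cdot H\subseteq H$: whenever $(vg)^p\in\beta_G(x)$ one concludes $C_G(vg)\subseteq C_G((vg)^p)=C_G(x)$ and the argument closes. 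The delicate case is $(vg)^p\in Z(G)$, which happens precisely when $\overline{g}^p=\overline{w}^{-p}$ in the abelian quotient $H/Z(G)$, confining the bad $g$ to a single coset of the $p$-th power kernel. Handling this exceptional coset is where the real care is needed: one can, for instance, replace $g$ by $gg'$ for a suitable $g'\in\beta_G(x)$ whose $p$-th power shifts the coset, or argue directly via induced regularity by comparing $\beta_G(v)$ with the disjoint union $v\,\beta_G(x)\cup\overline{v}$ inside $C_G(x)\setminus H$.
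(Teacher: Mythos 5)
Your argument is incomplete at precisely the point you flag, and the gap is genuine. To get $C_G(vg)\subseteq C_G(x)$ you rely on $(vg)^p=v^pg^p$ landing in $\beta_G(x)$; when $(vg)^p\in Z(G)$ nothing in your setup controls $C_G(vg)$, and neither of the repairs you sketch closes the case. The bad elements are the $g\in\beta_G(x)$ with $\overline{g}^{\,p}=\overline{w}^{-p}$, i.e.\ a full coset of the kernel of the $p$-th power map on the abelian group $H/Z(G)$. This set is nonempty (it contains $\overline{w}^{-1}$) and the kernel can have index as small as $p$ (e.g.\ $H/Z(G)\cong C_{p^2}$), so the bad set need not be a single coset of $Z(G)$. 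Consequently the comparison $\beta_G(v)\supseteq v\cdot(\text{good }g)\sqcup\overline{v}$ only yields $|\beta_G(v)|\geq |\beta_G(x)|-(|\ker|-1)|Z(G)|+|Z(G)|$, which is compatible with induced regularity and gives no contradiction; and replacing $g$ by $gg'$ does not help because the inclusion $v\,\beta_G(x)\subseteq\beta_G(v)$ must hold for \emph{every} $g$, not for chosen representatives. As written, the proof does not go through.

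For comparison, the paper sidesteps this case entirely. It sets $B=\{w\in\beta_G(x)\mid o(\overline{w})\neq p\}$; for such $w$ one still gets $C_G(w^p)=C_G(w)=C_G(x)$ (either $\overline{w}\in\langle\overline{w}^{\,p}\rangle$, or $w^p\notin Z(G)$ and maximality applies), hence $C_G(wy)=C_G(x)\cap C_G(y)$ is the \emph{same} centralizer for all $w\in B$, so $yB$ and $y^{-1}B$ lie in one part $\beta_G(w_0y)$. The hypothesis $p\neq 2$ is used to show these two sets are disjoint: otherwise $\overline{y}^{\,2}\in H_x/Z(G)$, whence $\overline{y}\in H_x/Z(G)$ since $o(\overline{y})$ is odd, a contradiction. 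Letting $T$ be generated by the elements of $H_x$ of order $p$ modulo $Z(G)$, one has $H_x\setminus T\subseteq B$, so $2\left(|H_x|-|T|\right)\leq|\beta_G(w_0y)|<|H_x|$, forcing $T=H_x$. If you want to keep your line of attack, you must supply an argument for the exceptional coset; the paper's restriction to $B$ plus the $y$ versus $y^{-1}$ counting is one way to avoid ever meeting it.
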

\begin{proof}
Let $H_x=\beta_G(x)\cup Z(G)$ and let $B=\{w\in \beta_G(x) \mid o(\overline{w})\neq p\}$. For  $w\in B$, $C_G(w)\cap C_G(y)\subseteq C_G(wy)$. Also , $C_G(wy)\subseteq C_G((wy)^p)=C_G(w^p)=C_G(w)$. Then $C_G(w)\cap C_G(y)=C_G(wy)$. Hence we obtain that $yB\subseteq \beta_G(w_0y)$ for some $w_0\in B$.  Let $T=\left\langle \{w\in H_{x}\mid o(\overline{w})=p\}\right\rangle$. Then $T/Z(G)$ is an elementary $p-$subgroup of $H_{x}/Z(G)$.  We have $y(H_{x}\setminus T) \subseteq \beta_G(w_0y)$ and  $y^{-1}(H_{x}\setminus T) \subseteq \beta_G(w_0y)$. If $y(H_{x}\setminus T)$ and $y^{-1}(H_{x}\setminus T)$ are not disjoint then $\overline{y}\overline{u}=\overline{y}^{-1}\overline{v}$ for some $u,v\in H_{x}\setminus T$. This implies that $y^2\in \overline{v}\,\overline{u}^{-1}\subseteq \beta_G(x)\cup Z(G)$. But also, since $y^p\in Z(G)$ and $p\neq2$, we get that $y\in \beta_G(x)$, a contradiction. So $y(H_{x}\setminus T)$ and $y^{-1}(H_{x}\setminus T)$ are disjoint. This implies that 
\[2|y(H_{x}\setminus T)|=|y(H_{x}\setminus T)|+|y^{-1}(H_{x}\setminus T)|\leq  |\beta_G(w_0y)| <|H_{x}|.\] 
So $2(|H_{x}|-|T|)<|H_{x}|$, which implies that $|H_{x}|<2|T|$. Thus $H_{x}=T$, and hence $H_{x}/Z(G)$ is an elementary $p-$group.
\end{proof}

\begin{theorem}\label{mg}
 If $G$ be a non-abelian induced regular group then  $G/Z(G)$ is a $p$-group.
\end{theorem}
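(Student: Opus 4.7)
The plan is to show that only one prime divides $|G/Z(G)|$, which by Cauchy's theorem is equivalent to showing every prime-order element of $G/Z(G)$ has the same prime order. Start with a non-central $x \in G$ whose image $\overline{x}$ has prime order $p$, and select a maximal centralizer $C_G(a)$ containing $C_G(x)$, so that $x \in C_G(a)$. Either $C_G(x) = C_G(a)$ is itself maximal (in which case take $a = x$) or $x \in C_G(a) \setminus \beta_G(a)$; in the latter case, provided $p$ is odd, Proposition~\ref{lg2} forces $H_a/Z(G) := (\beta_G(a)\cup Z(G))/Z(G)$ to be an elementary abelian $p$-group. In particular $o(\overline{a}) = p$, and every $z \in \beta_G(a)$ satisfies $o(\overline{z}) = p$.

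Next I would show that the prime $p$ is the same for every maximal centralizer that arises in this way. Suppose two distinct primes $p \ne q$ (at least one odd) arose from maximal centralizers $C_G(a)$ and $C_G(b)$. Any non-central element in $\beta_G(a) \cap \beta_G(b)$ would force $C_G(a) = C_G(b)$ and hence $p = q$, so $H_a \cap H_b = Z(G)$. Inside $G/Z(G)$ the subgroups $H_a/Z(G)$ and $H_b/Z(G)$ are then trivially intersecting elementary groups of different primes. I would leverage the uniform $\beta$-size condition $|\beta_G(g)| = |\beta_G(h)|$ (the definition of induced regularity) together with Proposition~\ref{lg} (each $H_a$ is a subgroup) to derive a contradiction, either by examining the centralizer structure of elements in $H_a H_b$, or by applying Proposition~\ref{lg1} at $C_G(b)$ to extract a prime-order witness whose order would then be forced to be simultaneously $p$ and $q$. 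Once $p$ is universal, any non-central $g$ has some prime-order power $\overline{g}^{m/q}$, which by the universality must have order $p$; hence $o(\overline{g})$ is a $p$-power, and $G/Z(G)$ is a $p$-group.

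The main obstacle is this universality step: Propositions~\ref{lg1} and \ref{lg2} are local tools working at a single maximal centralizer, and the induced-regularity hypothesis must be mobilized globally to rule out two different primes coexisting. A secondary difficulty is that Proposition~\ref{lg2} explicitly excludes $p = 2$, so when the only available prime witness is $2$, the argument must be adapted—either by redoing the disjointness calculation in the proof of Proposition~\ref{lg2} with an even-parity variant, or by arguing directly that $G/Z(G)$ then contains no element of odd prime order and is therefore a $2$-group.
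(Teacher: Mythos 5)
Your proposal correctly identifies the local tools (maximal centralizers, Propositions~\ref{lg} --\ref{lg2}, the uniform size of the sets $\beta_G(x)$), but the two steps you flag as ``obstacles'' are precisely the substantive content of the paper's proof, and neither is carried out; a third case is not addressed at all. First, the globalization step. The paper's mechanism is concrete: once one maximal centralizer $C_G(x_0)$ has $H_{x_0}/Z(G)$ a $p$-group, induced regularity gives $|H_x|=|H_{x_0}|$ for \emph{every} maximal centralizer, so every $H_x/Z(G)$ has $p$-power order; then, if some $\overline{y}$ had order $q\neq p$, one embeds $C_G(y)$ in a maximal $C_G(x)$ and shows $y\beta_G(x)\subseteq\beta_G(y)$ while $y\notin y\beta_G(x)$, forcing $|\beta_G(y)|>|\beta_G(x)|$ and contradicting induced regularity. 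Your sketch (``examine the centralizer structure of elements in $H_aH_b$'' or ``extract a prime-order witness'') does not produce this inequality, and the intersection argument you propose for two maximal centralizers with different primes is not obviously closable: nothing forces a second maximal centralizer to even admit a witness of odd prime order, which is exactly why the paper routes everything through the size count rather than through pairwise comparison of maximal centralizers.

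Second, the $p=2$ case is not an ``adaptation'' of Proposition~\ref{lg2}: the disjointness of $y(H_x\setminus T)$ and $y^{-1}(H_x\setminus T)$ collapses when $y^2\in Z(G)$, so an ``even-parity variant'' of that calculation is not available. The paper instead runs a separate three-step argument (first $C_G(x_0)/H_{x_0}$ is a $2$-group, then $o(\overline{y})$ is a $2$-power for every $y\notin H_{x_0}$, then $H_{x_0}/Z(G)$ is a $2$-group), none of which appears in your outline. Third, your opening dichotomy (``either $C_G(x)=C_G(a)$ is maximal, in which case take $a=x$, or $x\in C_G(a)\setminus\beta_G(a)$'') leaves the first branch with no tool to apply: when $C_G(a)=\beta_G(a)\cup Z(G)$ for all maximal centralizers, i.e.\ when $G$ is an AC-group, Propositions~\ref{lg1} and~\ref{lg2} are vacuous, and the paper disposes of this case by a different route entirely, citing \cite{to15} and \cite{ab06} to write $G=P\times A$ with $P$ a $p$-group and $A$ abelian. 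As it stands the proposal is a plausible plan whose hardest thirds are left open, so it does not constitute a proof.
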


\begin{proof}
Let $G$ be a non-abelian induced regular group. If $C_G(x)=\beta_G(x)\cup Z(G)$, for all $x\in G$, then $G$ is an AC-group. Thus by \cite[Theorem 2.11]{to15} and \cite[Proposition 2.6]{ab06}, we have $G=P\times A$, where $P$ is a $p-$group and $A$ is an abelian group. Hence, $\dfrac{G}{Z(G)}=\dfrac{P\times A}{Z(P)\times A}\cong \dfrac{P}{Z(P)}$, and so it is a $p-$group.
Now suppose that $G$ is not an AC-group. Let $C_G(x_0)$ be a non-abelian maximal centralizer of $G$ and let $H_{x_0}=\beta_G(x_0)\cup Z(G)$. By Proposition \ref{lg1}, there exists an element $y$ in $C_G(x_0)\setminus H_{x_0}$ such that $o(\overline{y})$ is prime $p$. The next goal is to show the following claim.

Claim: $H_{x_0}/Z(G)$ is a $p-$group.

If there is an element $y$ in $C_G(x_0)\setminus H_{x_0}$ such that $o(\overline{y})=p\neq2$, then by Proposition \ref{lg2}, $H_{x_0}/Z(G)$ is an elementary $p-$group.  Now suppose that for each element $y$ in $C_G(x_0)\setminus H_x$, $o(\overline{y})$ is either $2$ or not prime. In this case we show that $H_{x_0}/Z(G)$ is a $2-$group. This is achieved in three steps .

Step 1: We show that $C_G(x_0)/H_{x_0}$ is a $2-$group.  For simplicity, we will denote the coset $yH_{x_0}$ in $C_G(x_0)$ by $\tilde{y}$. Assume there is $y\in C_G(x_0)$ such that $o(\tilde{y})$ is divisible by a prime $p\neq2$, and let $y_0=y^{\frac{o(\tilde{y})}{p}}$. Clearly  $o(\tilde{y}_0)=p$ and $x_0y_0\notin H_{x_0}$. Now $y^p_0$ and $(x_0y_0)^p$ belong to $H_{x_0}$. But since $o(\overline{y}_0)$ and $o(\overline{x_0}\overline{y}_0)$ are either $2$ or not prime, we have $y^p_0$ and $(x_0y_0)^p$ belong to $\beta_G(x_0)$. Therefor  $C_G(x_0y_0) \cup C_G(y_0) \subseteq C_G((x_0y_0)^p)\cup  C_G((y_0)^p)=C_G(x_0)$. Using the same method as in the proof of Proposition \ref{lg1} one can easily show that $C_G(wy_0)=C_G(x_0)\cap C_G(y_0)=C_G(y_0)$ for all $w\in \beta_G(x)$. Thus we obtain that $y_0\beta_G(x_0)\subseteq \beta(y_0)$. From the fact that $y_0\notin y_0\beta_G(x_0)$, we get that $|\beta_G(y_0)|> |y_0\beta_G(x_0)|=|\beta_G(x_0)|$, a contradiction. Therefore $C_G(x_0)/H_{x_0}$ is a $2-$group.

Step 2: We show that for all $y\notin H_{x_0}$, $o(\overline{y})$ is a power of $2$. Let $y\notin H_{x_0}$, and suppose that $o(\overline{y})=2^k(2t-1)$. If $y^{2t-1}$ belongs to $\beta_G(x_0)$, then $(\tilde{y})^{2t-1}=\tilde{1}$ in $C_G(x_0)/H_{x_0}$, and so by Step 1, we get that $2$ divides $2t-1$, a contradiction. Thus $y^{2t-1}\notin \beta_G(x)$. Now suppose that $p$ is a prime divisor of $2t-1$ and let  $y_p=y^{\frac{2t-1}{p}}$. Then $o(\overline{y}_p)=p$, and so $y_p\in \beta_G(x_0)$. This implies that $2$ divides $(2t-1)/p$, a contradiction. Thus $2t-1=1$, and hence $o(\overline{y})$ is a power of $2$.

Step 3: We show that $\frac{H_{x_0}}{Z(G)}$ is a $2-$group. Assume that there is an element $w\in H_{x_0}$ such that $o(\overline{w})=p\neq2$. By Proposition \ref{lg1}, there is $y\in C_G(x_0)\setminus H_{x_0}$ such that $o(\overline{y})=2$. Then we have $wy\notin H_{x_0}$ and $o(\overline{wy})=2p$, which contradicts the result in Step 2. Hence $\frac{H_{x_0}}{Z(G)}$ is a $2-$group. This completes the proof of the claim.

Now let $C_G(x)$ be an arbitrary maximal centralizer in $G$, and let $H_x=\beta_G(x)\cup Z(G)$. Then from the regularity of $\Upsilon_{G\setminus Z(G)}$ we obtain that $|H_x|=|H_{x_0}|$. So $|H_x|$ is a $p-$group. Now we come to the last step in the proof which is showing that $G/Z(G)$ is a $p-$group. Assume that $[G:Z(G)]$ is divisible by a prime $q\neq p$. Let $y\in G$ such that $o(\overline{y})=q$, and let $C_G(x)$  be maximal centralizer that contains $C_G(y)$. Again following the same argument as in the proof of Proposition \ref{lg1} one can show that  $y\beta_G(x)\subseteq \beta_G(y)$. But since $y\notin y\beta_G(x)$ we obtain that $|\beta_G(x)|=|y\beta_G(x)|<|\beta_G(y)|$, which contradicts the fact that $G$ is induced regular. Therefore $G/Z(G)$ is a $p-$group.      
\end{proof}

\begin{proposition} Let $G$ be an induced regular group. If $[G:Z(G)]=p^q$ where $p$ and $q$ are primes then $G/Z(G)$ is an elementary $p-$group. Moreover, for each $x\in G$, $|\beta_G(x)|=(p-1)|Z(G)|$. 
\end{proposition}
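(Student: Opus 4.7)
The plan is to pin down the common value $|H_x/Z(G)|$ via a divisibility argument and then deduce elementariness from the resulting constraint on $|\beta_G(x)|$.

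By Theorem \ref{mg}, $G/Z(G)$ is a $p$-group, so the hypothesis reads $|G/Z(G)|=p^q$. Since $G$ is non-abelian, fix a maximal centralizer $C_G(x_0)$. By Proposition \ref{lg}, $H_{x_0}:=\beta_G(x_0)\cup Z(G)$ is a subgroup of $G$, so $H_{x_0}/Z(G)$ is a subgroup of the $p$-group $G/Z(G)$, hence itself a $p$-group of some order $p^s$ with $1\leq s\leq q$. Induced regularity now forces $|\beta_G(x)|=|\beta_G(x_0)|=(p^s-1)|Z(G)|$ for every $x\in G\setminus Z(G)$.

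Next, counting non-central elements via the $|Cent(G)|-1$ equally sized parts of $\Upsilon_{G\setminus Z(G)}$ yields
\[
(|Cent(G)|-1)(p^s-1)=p^q-1.
\]
Thus $(p^s-1)\mid(p^q-1)$; the standard identity $\gcd(p^s-1,p^q-1)=p^{\gcd(s,q)}-1$ translates this to $s\mid q$. Since $q$ is prime, either $s=1$ or $s=q$, and $s=q$ would give $|Cent(G)|=2$, contradicting Proposition \ref{be0}. Hence $s=1$, so $|\beta_G(x)|=(p-1)|Z(G)|$ for every $x\in G\setminus Z(G)$, which is the ``moreover'' claim.

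To finish, suppose some $\bar{y}\in G/Z(G)$ has order $p^i$ with $i\geq2$. For each $j$ with $\gcd(j,p)=1$ pick $k$ such that $\bar{y}^{jk}=\bar{y}$; the sandwich $C_G(y)\subseteq C_G(y^j)\subseteq C_G(y^{jk})=C_G(y)$ shows $y^jz\in\beta_G(y)$ for every $z\in Z(G)$. These $\phi(p^i)=p^{i-1}(p-1)$ cosets of $Z(G)$ are distinct, so $|\beta_G(y)|\geq p^{i-1}(p-1)|Z(G)|>(p-1)|Z(G)|$, contradicting the previous paragraph. Hence every nontrivial coset has order $p$, so $G/Z(G)$ is an elementary $p$-group. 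The main obstacle is the divisibility step that pins down $s$; once $s=1$ is established, both conclusions fall out mechanically.
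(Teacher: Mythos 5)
Your proof is correct and follows essentially the same route as the paper's: identify $|\beta_G(x)|/|Z(G)|=p^s-1$ for a maximal centralizer via Proposition \ref{lg} and Theorem \ref{mg}, count the equally sized parts to get $(|Cent(G)|-1)(p^s-1)=p^q-1$, and conclude $s\mid q$, hence $s=1$. Your write-up is in fact more complete than the paper's, which bounds $s\leq q-1$ directly from $H_x\subseteq C_G(x)\subsetneq G$ rather than by invoking Proposition \ref{be0}, and which leaves implicit the final step you spell out (that $|\beta_G(x)|=(p-1)|Z(G)|$ forces every nontrivial element of $G/Z(G)$ to have order exactly $p$).
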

\begin{proof}
Suppose $[G:Z(G)]=p^q$, for some $q\geq 2$. Consider a maximal centralizer $C_G(x)$ in $G$. Then $|\beta_G(x)|/|Z(G)|=p^s-1$ for some $1\leq s\leq q-1$ and $(|Cent(G)|-1)(p^s-1)=p^q-1$, hence $p^s-1$ divides $p^q-1$. This implies that $s$ divides $q$. Thus, if $q$ is prime, then $s=1$, which completes the proof.
\end{proof}

\begin{corollary}\label{cmg}
Let $G$ be an induced regular group of odd order. If $C_G(x)\neq\beta_G(x)\cup Z(G)$, for all $x\in G$  then $G/Z(G)$ is an elementary $p$-group.
\end{corollary}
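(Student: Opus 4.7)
The plan is to upgrade Theorem \ref{mg} (which already gives that $G/Z(G)$ is a $p$-group) to the elementary statement. Since $|G|$ is odd, the prime $p$ is automatically odd. The task reduces to showing that every non-identity element of $G/Z(G)$ has order exactly $p$. My approach is a two-step reduction: first show that each $H_m/Z(G)$ attached to a maximal centralizer is elementary abelian, then transport this to arbitrary $g\in G\setminus Z(G)$ by a $\beta$-class translation trick modelled on Proposition \ref{lg1}.

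First, fix any maximal centralizer $C_G(m)$. The hypothesis $C_G(m)\neq H_m$ (where $H_m=\beta_G(m)\cup Z(G)$) lets Proposition \ref{lg1} produce some $y\in C_G(m)\setminus \beta_G(m)$ with $o(\overline{y})$ prime. Because $G/Z(G)$ is a $p$-group this prime must be $p$, and since $p\neq 2$, Proposition \ref{lg2} applies and gives that $H_m/Z(G)$ is an elementary abelian $p$-group. This now holds for every maximal centralizer of $G$.

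Next, given an arbitrary non-central $g\in G$, I aim to show $g^p\in Z(G)$. Suppose not. Choose a maximal centralizer $C_G(m)$ with $C_G(g^p)\subseteq C_G(m)$. The chain $C_G(g)\subseteq C_G(g^p)\subseteq C_G(m)$ then forces $g\in C_G(m)$; moreover, if $g$ lay in $H_m$ the elementary structure from the previous step would give $g^p\in Z(G)$, a contradiction, so $g\in C_G(m)\setminus H_m$. For any $u\in \beta_G(m)$ the elements $g$ and $u$ commute (both lie in $C_G(m)=C_G(u)$), and since $u^p\in Z(G)$ we compute $(gu)^p=g^p u^p\equiv g^p \pmod{Z(G)}$; hence $C_G((gu)^p)=C_G(g^p)\subseteq C_G(m)=C_G(u)$. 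This inclusion allows us to squeeze any $h\in C_G(gu)$ first into $C_G(u)$ and then, via the relations $hgu=guh$ and $hu=uh$, into $C_G(g)$. Therefore $C_G(gu)=C_G(g)$, that is, $gu\in \beta_G(g)$, so $g\beta_G(m)\subseteq \beta_G(g)$. Induced regularity makes both sets the same size, so equality holds, and then $g\in \beta_G(g)=g\beta_G(m)$ forces $1\in\beta_G(m)$, i.e.\ $C_G(m)=G$, contradicting the maximality of $C_G(m)$.

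The delicate point, and the expected main obstacle, is precisely the choice of $C_G(m)$: it must be selected to contain $C_G(g^p)$, not merely $C_G(g)$. This is exactly what forces the centralizer of $(gu)^p$ into $C_G(u)$; without this inclusion the Proposition \ref{lg1}-style cancellation $h\in C_G(u)\Rightarrow h\in C_G(g)$ would fail and the $\beta$-class translation argument would collapse.
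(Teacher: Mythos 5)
Your proof is correct, and it is in fact more complete than the paper's own. The paper disposes of this corollary in one line --- ``this result follows directly from Proposition \ref{lg2}'' --- which, read literally, only yields that $(\beta_G(x)\cup Z(G))/Z(G)$ is an elementary $p$-group for each \emph{maximal} centralizer $C_G(x)$ (via Proposition \ref{lg1}, with the odd order ruling out the prime $2$); that is exactly your first step. Your second step --- transporting the conclusion to an arbitrary $g$ with $g^p\notin Z(G)$ by choosing a maximal $C_G(m)$ above $C_G(g^p)$ and driving the translation $g\beta_G(m)\subseteq\beta_G(g)$ to the contradiction $1\in\beta_G(m)$ --- is the part the paper leaves unsaid, and it is genuinely needed: an element whose centralizer is not maximal lies in no $\beta_G(x)$ with $C_G(x)$ maximal, so Proposition \ref{lg2} alone says nothing about it. Your bridging argument is a sound adaptation of the device the authors themselves use in the final paragraph of the proof of Theorem \ref{mg} (there applied to an element of prime order $q\neq p$ modulo the center), and your key refinement --- selecting $C_G(m)\supseteq C_G(g^p)$ rather than merely $C_G(m)\supseteq C_G(g)$, so that $C_G(gu)\subseteq C_G((gu)^p)=C_G(g^p)\subseteq C_G(u)$ and the cancellation $h\in C_G(u)\Rightarrow h\in C_G(g)$ goes through --- is exactly right. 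The only cosmetic omission is the reverse inclusion $C_G(g)=C_G(g)\cap C_G(u)\subseteq C_G(gu)$ needed to upgrade $C_G(gu)\subseteq C_G(g)$ to the equality $C_G(gu)=C_G(g)$; it is immediate, as in the proof of Proposition \ref{lg1}.
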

\begin{proof}
This result follows directly from Proposition \ref{lg2}.
\end{proof}

\begin{proposition}\label{pp}
 Let $G$ be a non-abelian group. If there is a prime $p$ such that $G/Z(G)\cong C_p\times C_p$, then $G$ is induced regular.
\end{proposition}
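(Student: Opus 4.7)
The plan is to show that $|\beta_G(x)|$ is constant across all non-central $x \in G$. Once this is established, regularity of $\Upsilon_{G \setminus Z(G)}$ follows from the identity
\[
\deg_{\Upsilon_{G\setminus Z(G)}}(x) = |G| - |Z(G)| - |\beta_G(x)|.
\]

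First I would record the basic structural data. Since $G/Z(G) \cong C_p \times C_p$, every non-central $x \in G$ satisfies $\overline{x} \neq \overline{1}$ and $o(\overline{x}) = p$, so the proper subgroup $C_G(x)/Z(G) \le G/Z(G)$ contains $\langle \overline{x}\rangle$ and, being proper of order dividing $p^2$, must coincide with $\langle \overline{x} \rangle$. Hence $|C_G(x)| = p\,|Z(G)|$ for every $x \in G\setminus Z(G)$. By Proposition \ref{be}, $|Cent(G)|=p+2$, so there are exactly $p+1$ distinct proper centralizers, one for each subgroup of order $p$ in $G/Z(G)$.

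The key step is to prove the identity $\beta_G(x) = C_G(x)\setminus Z(G)$ for every non-central $x$. The forward inclusion is immediate, since $y\in\beta_G(x)$ implies $y\in C_G(y)=C_G(x)$ and $y\notin Z(G)$ (otherwise $C_G(x)=G$). For the reverse inclusion, take any $y \in C_G(x)\setminus Z(G)$. Then $\overline{y}$ is a nontrivial element of the order-$p$ subgroup $C_G(x)/Z(G) = \langle \overline{x}\rangle$, so $\langle \overline{y}\rangle = \langle \overline{x}\rangle$. Applying the observation of the previous paragraph with $y$ in place of $x$, we obtain
\[
C_G(y)/Z(G) = \langle \overline{y}\rangle = \langle \overline{x}\rangle = C_G(x)/Z(G),
\]
and hence $C_G(y)=C_G(x)$, i.e. $y\in\beta_G(x)$.

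Combining these, $|\beta_G(x)| = |C_G(x)| - |Z(G)| = (p-1)|Z(G)|$, a quantity independent of the choice of $x \in G\setminus Z(G)$. This yields induced regularity. The only substantive point is the second step, where one must leverage the rigid subgroup lattice of $C_p \times C_p$ (only one subgroup of order $p$ through any given nontrivial element) to force $C_G(y)=C_G(x)$; everything else is an index computation.
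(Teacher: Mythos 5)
Your proof is correct, but it takes a genuinely different route from the paper's. The paper argues by counting: it invokes Proposition \ref{be} to get that $\Upsilon_{G\setminus Z(G)}$ has exactly $p+1$ parts, notes that each part $\beta_G(x_i)$ contains the $p-1$ cosets $\overline{x}_i,\dots,\overline{x}_i^{\,p-1}$ and hence has size at least $(p-1)|Z(G)|$, and then uses the exact identity $(p+1)(p-1)=p^2-1=[G:Z(G)]-1$ to force equality in every part. You instead identify each part explicitly: since $C_G(x)/Z(G)$ is a proper subgroup of $C_p\times C_p$ containing $\overline{x}$, it must equal $\langle\overline{x}\rangle$, and the rigidity of the subgroup lattice (a nontrivial element of a group of order $p$ generates it) then gives $\beta_G(x)=C_G(x)\setminus Z(G)$, of size $(p-1)|Z(G)|$ for every non-central $x$. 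Your argument is self-contained --- it does not actually need Proposition \ref{be}, and in fact reproves $|Cent(G)|=p+2$ along the way --- and it yields strictly more information, namely the explicit description of the parts (equivalently, that $G$ is an $AC$-group with $\beta_G(x)\cup Z(G)=C_G(x)$ for all non-central $x$). The paper's averaging argument is shorter once the centralizer count is taken as given, but it only determines the sizes $|\beta_G(x_i)|$, not the sets themselves.
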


\begin{proof}
Suppose that $G/Z(G)\cong C_p\times C_p$. Then by Proposition \ref{be}, $\Upsilon_{G\setminus Z(G)}$  has $p+1$ parts. Let $\beta_G(x_1),\cdots, \beta_G(x_{p+1})$ be the parts $\Upsilon_{G\setminus Z(G)}$ . Then 
\[\frac{1}{|Z(G)|}\sum\limits_{i=1}^{p+1}{|\beta_G(x_i)|}=[G:Z(G)]-1=p^2-1.\] 
On the other hand, $o(\overline{x}_i)=p$ for all $i$. Then for each $i\in\{1,\cdots,p+1\}$, $\bigcup\limits_{j=1}^{p-1}\overline{x}_i^{j}\subseteq \beta_G(x_i)$, which implies that $\frac{|\beta_G(x_i)|}{|Z(G)|}\geq (p-1)$, for all $i$. If we assume that  $\frac{|\beta_G(x_i)|}{|Z(G)|}> (p-1)$, for some $i$, then we obtain that 
\[p^2-1=\frac{1}{|Z(G)|}\sum\limits_{i=1}^{p+1}{|\beta_G(x_i)|}> (p+1)(p-1)=p^2-1,\] 
a contradiction. Thus $\frac{|\beta_G(x_i)|}{|Z(G)|}=(p-1)$, for all $i$, and hence $\Upsilon_{G\setminus Z(G)}$ is regular.
\end{proof}

\begin{theorem}\label{big1} A group $G$ is induced regular if and only if $G\cong H\times A$ where $A$ is an abelian group, and $H$ is an induced regular $p-$group for some prime $p$. 
\end{theorem}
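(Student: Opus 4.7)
The strategy is to mimic the proof of Theorem \ref{big}, using Theorem \ref{mg} in place of Theorem \ref{creg} as the structural input and replacing the role of the Sylow $2$-subgroup by a Sylow $p$-subgroup. The only subtlety is the regularity transfer, which now must be phrased for the induced non-centralizer graph and therefore rests on the fact that the common size of a $\beta$-class survives the direct product decomposition.

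For the forward direction, assume $G$ is induced regular. Theorem \ref{mg} gives that $G/Z(G)$ is a $p$-group for some prime $p$, so I can write $|G|=p^k m$ with $\gcd(p,m)=1$, and $|Z(G)|=p^s m$ for some $0\le s\le k$. Let $A$ be the unique subgroup of order $m$ inside the abelian group $Z(G)$, and let $H$ be a Sylow $p$-subgroup of $G$. Since $A\subseteq Z(G)$, it commutes with $H$, so $HA=AH$ is a subgroup; and $H\cap A=\{e\}$ because the orders are coprime. Counting gives $G=HA$, and central commutation upgrades this to $G\cong H\times A$. This is verbatim the argument already used in Theorem \ref{big}.

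It remains to prove that $H$ is induced regular. For $x\in H$, the product structure yields $C_G(x)=C_H(x)\times A$, so that for $x,y\in H$ we have $C_G(x)=C_G(y)$ iff $C_H(x)=C_H(y)$; hence $\beta_G(x)=\beta_H(x)\times A$, and in particular $|\beta_G(x)|=|\beta_H(x)|\cdot|A|$. Moreover $Z(G)=Z(H)\times A$, so $x\in H\setminus Z(H)$ iff $(x,e)\in G\setminus Z(G)$. Since $G$ is induced regular, $|\beta_G(x)|$ is constant on $G\setminus Z(G)$, so $|\beta_H(x)|$ is constant on $H\setminus Z(H)$, which via Proposition \ref{ereg1}-style bookkeeping (really just the definition of the induced non-centralizer graph) means $\Upsilon_{H\setminus Z(H)}$ is regular.

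For the converse, assume $G=H\times A$ with $A$ abelian and $H$ induced regular with $H/Z(H)$ a $p$-group. The same identities $Z(G)=Z(H)\times A$ and $C_G((h,a))=C_H(h)\times A$ give $\beta_G((h,a))=\beta_H(h)\times A$ for every $(h,a)\in G$, and the element $(h,a)$ is non-central iff $h\in H\setminus Z(H)$. Induced regularity of $H$ then transfers to $G$ immediately, since $|\beta_G((h,a))|=|\beta_H(h)|\cdot|A|$ is constant over $G\setminus Z(G)$. The main obstacle, which is really just a bookkeeping point, is confirming that the $\beta$-sets and the center split cleanly under the direct product; once this is done both directions reduce to multiplying or dividing by $|A|$.
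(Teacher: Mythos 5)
Your proposal is correct and follows essentially the same route as the paper: decompose $G=HA\cong H\times A$ with $A$ the odd-$p$-part of $Z(G)$ and $H$ a Sylow $p$-subgroup, then transfer induced regularity in both directions via $Z(G)=Z(H)\times A$ and $C_G((h,a))=C_H(h)\times A$, so that $|\beta_G((h,a))|=|\beta_H(h)|\cdot|A|$. The only cosmetic difference is that the paper carries out the forward-direction transfer by intersecting $\beta_G(x)$ with $H$ as a union of $Z(G)$-cosets, whereas you read it off directly from the product decomposition; the content is identical.
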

\begin{proof}
Suppose $G$ is induced regular. Then $G/Z(G)$ is a $p-$group. Let $|G|=p^km$, where $(m,p)=1$. Then $|Z(G)|=p^sm$ for some $1\leq s \leq k$. Let $A$ be the subgroup of $Z(G)$ of order $m$ and let $H$ be a $p-$sylow subgroup of $G$. Now $HA=AH$. So $HA\leq G$. Also, we have $H\cap A=\{e\}$ (because $(|H|,|A|)=1$), and hence $|HA|=|H||A|=|G|$. Therefor $G=HA$. Moreover, since $ha=ah$ for all $a\in A$ and $h\in H$, we have $HA\cong H\times A$. It remains to show that $H$ is induced regular. Fix $x$ in $H\setminus Z(H)$ and let $s=|\beta_G(x)|/|Z(G)|$. Since $A\leq Z(G)$, we obtain that $y$ belongs to $\beta_G(x)$ if and only if $C_H(x)=C_H(y)$. So $\beta_H(x)=\beta_G(x)\cap H$. Also since $Z(G)=Z(H)A$, there are $y_1,\cdots, y_s$ in $H$ such that  $\beta_G(x)= \bigsqcup\limits_{i=1}^{s}y_i Z(G)$. So \[\beta_H(x)= \beta_G(x)\cap H=\left(\bigsqcup\limits_{i=1}^{s}y_iZ(G)\right)\cap H=\bigsqcup\limits_{i=1}^{s}y_i(Z(G)\cap H)=\bigsqcup\limits_{i=1}^{s}y_iZ(H).\] Thus $|\beta_H(x)|=\dfrac{|\beta_G(x)|}{|Z(G)|}|Z(H)|$. Therefore $H$ is induced regular.

Now we prove the converse. Assume that $G=H\times A$ where $A$ is an abelian group, $H$ is an induced regular group of order $p^k$. Now $Z(G)=Z(H)\times Z(A)=Z(H)\times A$. Also, for each $(h,a)\in G$, $C_G(h,a)=C_H(h)\times C_A(a)=C_H(h)\times A$. So $\beta_G(h,a)=\beta_H(h)\times A$, for all $(h,a)\in G$. Therefore $G$ is induced regular.
\end{proof}

It is worth mentioning that there are several induced regular groups $G$ for which $G/Z(G)$ is not abelian such as the groups with GAP ID's $[243,2]$ to $[243,9]$.  On the other hand, we checked many groups and we could not find one induced regular group $G$ for which $G/Z(G)$ is not elementary. Based on these remarks and the results obtained in this section we make the following conjecture. 

\begin{conjecture}\label{lco}
If $G$ is induced regular group then $G/Z(G)$ is an elementary $p-$group.
\end{conjecture}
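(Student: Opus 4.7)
The plan is to reduce the conjecture to the case of an induced regular $p$-group and then show that every coset $\overline{x} \in G/Z(G)$ has order exactly $p$. By Theorem \ref{mg}, $G/Z(G)$ is a $p$-group, and Theorem \ref{big1} gives $G \cong H \times A$ with $H$ an induced regular $p$-group and $A$ abelian; since $G/Z(G) \cong H/Z(H)$, it suffices to prove the conjecture for $H$. Thus I assume throughout that $G$ is itself an induced regular $p$-group.

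Fix $x \in G \setminus Z(G)$ and embed $C_G(x)$ in a maximal centralizer $C_G(x_0)$; set $H_{x_0} = \beta_G(x_0) \cup Z(G)$, which is a subgroup by Proposition \ref{lg}. The strategy is first to show $H_{x_0}/Z(G)$ is an elementary $p$-group, then to extend this to $C_G(x_0)/Z(G)$; since every element of $G$ lies in some $C_G(x_0)$ with $x_0$ maximal, this will give the conjecture. When $p$ is odd and $C_G(x_0) \neq H_{x_0}$, Propositions \ref{lg1} and \ref{lg2} already provide that $H_{x_0}/Z(G)$ is elementary. To upgrade to $C_G(x_0)/Z(G)$, I would take $y \in C_G(x_0) \setminus H_{x_0}$ and iterate the centralizer-intersection computation from Proposition \ref{lg1}: for every $g \in \beta_G(x_0)$, the equality $C_G(gy) = C_G(x_0) \cap C_G(y) = C_G(y)$ forces $y\beta_G(x_0) \subseteq \beta_G(y)$, and induced regularity promotes this to $y\beta_G(x_0) = \beta_G(y)$; repeating with successive $p$-powers of $y$ should collapse $\overline{y}^{\,p}$ to the identity. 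In the complementary AC-group case, where $C_G(x_0) = H_{x_0}$ for every maximal $x_0$, the results \cite[Theorem 2.11]{to15} and \cite[Proposition 2.6]{ab06} give $G = P \times A$ with $P$ a $p$-group; the partition of $G \setminus Z(G)$ by the maximal abelian subgroups, together with induced regularity, yields $(|Cent(G)| - 1)(p^s - 1) = [G:Z(G)] - 1$, where $p^s = [H_{x_0}:Z(G)]$, and further divisibility constraints on the maximal abelian subgroups of AC $p$-groups should force $s = 1$.

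The main obstacle will be the case $p = 2$. The step of Proposition \ref{lg2} that excludes $y \in H_{x_0}$ from $y^2 \in H_{x_0}$ and $y^p \in Z(G)$ uses $p \neq 2$ in an essential way, and the disjointness $y H_{x_0} \cap y^{-1} H_{x_0} = \emptyset$ collapses when $o(\overline{y}) = 2$. To address this I would search for an element $y$ with $o(\overline{y}) = 4$, aiming to exhibit three distinct cosets $y H_{x_0}$, $y^{-1} H_{x_0}$, $y^2 H_{x_0}$ all contained in a single $\beta_G(w_0 y)$ and derive a cardinality contradiction; alternatively, one could try to leverage Theorem \ref{creg} (which already controls the \emph{regular} $2$-group case) as a base case and reduce the general induced regular $2$-group to it. Finding an argument that cleanly handles elements of $G/Z(G)$ of order $4$ is precisely where the present techniques stall, and this is where the essential difficulty of the conjecture lies.
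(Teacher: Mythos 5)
The statement you were asked to prove is stated in the paper as Conjecture \ref{lco}, not as a theorem: the paper offers no proof of it, and in fact explicitly isolates the missing piece as an open Problem (the case where $C_G(x)=\beta_G(x)\cup Z(G)$ or where $(\beta_G(x)\cup Z(G))/Z(G)$ is a $2$-group). Your proposal does not close this gap either. The reductions you make are sound and match what the paper already establishes: Theorem \ref{big1} reduces to an induced regular $p$-group, and Propositions \ref{lg1} and \ref{lg2} handle $H_{x_0}/Z(G)$ when $p$ is odd and $C_G(x_0)\neq H_{x_0}$. But the two places where you would need new ideas are exactly where your argument becomes aspirational rather than demonstrative. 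First, the step ``repeating with successive $p$-powers of $y$ should collapse $\overline{y}^{\,p}$ to the identity'' is not an argument: the identity $y\beta_G(x_0)=\beta_G(y)$ gives a contradiction in Proposition \ref{lg1} only under the hypothesis that \emph{no} element of $C_G(x_0)\setminus\beta_G(x_0)$ has prime coset order, and iterating it for a genuine $y$ with $o(\overline{y})=p^2$ does not produce the cardinality clash you need. Second, in the AC-group case the relation $(|Cent(G)|-1)(p^s-1)=p^q-1$ only forces $s\mid q$ (as the paper's own proposition notes, one gets $s=1$ only when $q$ is prime), so ``further divisibility constraints should force $s=1$'' is precisely the unproved content of the conjecture, not a consequence of known facts.

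You are candid that the $p=2$ case is where the techniques stall, and that candor is to your credit; but it means the proposal is a research program, not a proof. Since the paper itself leaves this statement as a conjecture, there is no gap relative to the paper's proof --- there is simply no proof on either side. If you want to make partial progress rigorous, the two concrete sub-claims to attack are: (i) for an induced regular $p$-group with $p$ odd that is an AC-group, show the maximal abelian subgroups $H_{x}$ all satisfy $[H_x:Z(G)]=p$; and (ii) for $p=2$, rule out an element with $o(\overline{y})=4$ inside a maximal $H_{x_0}$, e.g.\ via the three-coset cardinality idea you sketch, which would need a proof that $yH_{x_0}$, $y^{-1}H_{x_0}$, $y^{2}H_{x_0}$ really land in a single part of the graph.
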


The following problem state the remaining part needed to completely prove conjecture \ref{lco} .

\begin{problem}
Let $G$ is induced regular group, and let $C_G(x)$ be a maximal centralizer such that $C_G(x)=\beta_G(x)\cup Z(G)$ or 
$(\beta_G(x)\cup Z(G))/Z(G)$ is $2-$group. Then $\beta_G(x)\cup Z(G)$ is elementary $p-$group.
\end{problem}

\end{document}